\documentclass[11pt,reqno]{amsart}
\usepackage{mathrsfs}
\usepackage{amsfonts,amssymb,amsmath,amsthm}
 \usepackage{enumerate}
\newtheorem{theorem}{Theorem}[section]
\newtheorem{lemma}[theorem]{Lemma}

\theoremstyle{definition}
\newtheorem{definition}[theorem]{Definition}
\newtheorem{remark}{Remark}
\numberwithin{equation}{section}

\usepackage[left=2.0cm, right=2.0cm, top=2.0cm, bottom=2.0cm]{geometry}

\usepackage{cite}

\DeclareMathOperator{\R}{\mathbb{R}}




\author{Lixia Yuan}
\address{
Mathematics and Science College\\
Shanghai Normal University\\
 Shanghai, 200234, China}
\email{yuanlixia@shnu.edu.cn}

\author{Wei Zhao}
\address{
Department of Mathematics\\
East China University of Science and Technology\\
Shanghai, 200237,  China}
\email{szhao\underline{ }wei@yahoo.com}


\keywords{Anisotropic curvature flow, cup-like, traveling wave, asymptotic behavior}
\subjclass[2010]{35K93, 53C44, 35C07}
\begin{document}

\title[On a Curvature Flow in a Band Domain with Unbounded Boundary Slopes]{On a Curvature Flow in a Band Domain with Unbounded Boundary Slopes}

\begin{abstract}
We consider an anisotropic curvature flow $V= A(\mathbf{n})H + B(\mathbf{n})$ in a band domain $\Omega :=[-1,1]\times \R$,
where $\mathbf{n}$, $V$ and $H$ denote the unit normal vector, normal velocity and
curvature, respectively, of a graphic curve $\Gamma_t$. We consider the case when $A>0>B$ and the curve $\Gamma_t$ contacts $\partial_\pm \Omega$  with
slopes equaling to $\pm 1$ times of its height (which are unbounded when the solution moves to infinity).
 First, we present the global well-posedness and then, under some symmetric assumptions on $A$ and $B$,
we show the uniform interior gradient estimates for the solution.
Based on these estimates, we prove that $\Gamma_t$ converges as $t\to \infty$ in $C^{2,1}_{\text{loc}} ((-1,1)\times \R)$ topology to a
cup-like traveling wave with {\it infinite} derivatives on the boundaries.
\end{abstract}
\maketitle

\section{Introduction} \label{sect1}
  Consider the following curvature flow
\begin{equation}
  V= A(\mathbf{n})H  + B(\mathbf{n})\qquad \text{on} \qquad\Gamma_t\subset \Omega, \label{mcf}
\end{equation}
in the band domain $\Omega:=\{(x,y)|-1\leq x\leq 1, y\in\R\}$ in $\R^2$. Here, $\Gamma_t$ is a family of simple curves
in $\Omega$ which contact the boundaries  with prescribed angles, the geometric quantities $\mathbf{n}$, $V$ and $H$ denote the upward unit normal vector, the normal velocity and
the curvature of $\Gamma_t$, respectively, and $A,-B$ are two smooth positive functions defined on $\mathbb{S}^1$. The equation \eqref{mcf} is an important model arising in physics such as phase transition problems, and in
singular limit problems of some partial differential equations (cf. \cite{GM,MW,RJ,CX,MD} etc). In particular,
if the curve $\Gamma_t$ is a smooth graph of a function $y=u(x,t)$  for each $t$, then
$$
\mathbf{n} = \frac{(-u_x, 1)}{\sqrt{1+u_x^2}},\qquad   V=\frac{u_t}{\sqrt{1+u^2_x}},\qquad H=\frac{u_{xx}}{\left(1+u^2_x\right)^{3/2}},
$$
and Problem (\ref{mcf}) can be expressed as
\begin{equation}\label{A}
\left\{
 \begin{array}{llll}
\displaystyle u_t= a(u_x) \frac{u_{xx}}{1+u^2_x} + b(u_x) \sqrt{1+u_x^2},&& -1<x<1,\quad t>0,\\
\\
u_x(-1,t)=g_-,\quad u_x(1,t)=g_+,&& t>0,
 \end{array}
 \right.
\end{equation}
where
$$
a(u_x) = A\left( \frac{(-u_x, 1)}{\sqrt{1+u_x^2}} \right),\quad b(u_x) = B \left( \frac{(-u_x, 1)}{\sqrt{1+u_x^2}} \right),
$$
and $g_-, g_+$ denote the boundary contact conditions induced by the prescribed angles.

In case  $a\equiv 1$ and $b\equiv 0$, a lot of effort has been devoted to the study of this problem.
In 1989, Huisken \cite{Hui} considered the boundary value problem for the equation in Problem \eqref{A}.
He proved that any global solution to the Dirichlet problem converges to a
linear function, while any global solution to the homogeneous Neumann problem converges
to a constant. In 1993, Altschuler and Wu \cite{AW1} studied the inhomogeneous Neumann
problem, that is, Problem \eqref{A} with $g_+, -g_-$ being positive constants. They
proved that any global solution converges to a {\it grim reaper} (which is also called a
traveling wave).  A year later this result was extended  to the case of two dimension by themselves (see Altschular and Wu \cite{AW2}).
If $a$ and $b$ are not constants, Cai and Lou \cite{CaiLou2} investigated Problem \eqref{A}
with $g_\pm $ being (almost) periodic functions of $u$, and proved that any solution
converges to a (almost) periodic traveling wave. Recently, Yuan and Lou \cite{YuanLou} have considered the problem
when $g_\pm = g_\pm (u)$ are asymptotic periodic functions as $u\to \pm \infty$.
They constructed some entire solutions connecting two periodic traveling waves.
There are many other interesting works related to the mean curvature flow  in domains with boundaries; for example,  see \cite{LMN, MNL}, etc.  for problems in band domain with undulating boundaries; see \cite{CGK, GGH, GH, Koh}, etc. for self-similar solutions in sectors on the plane; and see \cite{ChenGuo, GMSW}, etc.  for problems on the half space.

In all the works mentioned above, the boundary slopes   are bounded, no matter when the equations are linear or  nonlinear.
In this paper, we consider the case of unbounded boundary slopes. More precisely,
\begin{equation}\label{B}
\left\{
 \begin{array}{ll}
\displaystyle u_t= a(u_x) \frac{u_{xx}}{1+u^2_x} + b(u_x) \sqrt{1+u_x^2}, &  -1<x<1,\quad t>0,\\
\\
u_x(\pm1,t)=\pm u(\pm1,t), & t>0,\\
\\
u(x,0)=u_0(x), & -1\leq x\leq 1.
 \end{array}
 \right.
\end{equation}

In 2012, Chou and Wang \cite{CW} studied the equation in \eqref{B} under the assumptions that $a\equiv 1,\ b\equiv 0$ and Robin boundary conditions:
$$
u_x(\pm 1,t) = \alpha_\pm u(\pm 1,t) +\beta_\pm,\quad t>0.
$$
They divided the parameters $\alpha_\pm$ and $\beta_\pm$ into several cases, and
studied the asymptotic behavior in each cases. But for the cases $\min u\to \infty$
or $\max u\to -\infty$ (this happens, for example, when $\alpha_- <0 <\alpha_+$),
they did not obtain the convergence of the solution and left it as an open problem. Recently, this problem has been solved by
Lou, Wang and Yuan \cite{LWY}.
More precisely,
in case $a\equiv 1$ and $b\equiv 0$, they proved that $u$ converges to
a {grim reaper} with span $(-1,1)$.

The present paper is devoted to Problem \eqref{B} with non-constant
$a$ and $b$. We show that in this case the {grim reaper}  no longer exist  while some
{\it cup-like} traveling waves can be constructed by shooting method. In particular,
the profile of such a  wave has a finite height, which is different from that of a grim reaper. See Sections  \ref{sec4}-\ref{sec5} below for details.

In order to state our main result, we first
give a result concerned with the following ODE: given $h\in\R\cup\{+\infty\}$, find a solution pair $(c,\varphi)$ to
\begin{equation}\label{TW-p}
\left\{
 \begin{array}{l}
 \displaystyle c =a(\varphi') \frac{\varphi'' }{1+ (\varphi')^2} + b(\varphi') \sqrt{1+ (\varphi')^2}, \qquad x\in (-1,1),\\
 \\
 \varphi'(-1)= -h,\quad \varphi'(1)= h.
 \end{array}
 \right.
\end{equation}

For convenience, set
\begin{equation*}\label{def-a0b0}
a^0 := \max a \geq a_0 := \min a,\qquad   b_0 := \min b \leq b^0 := \max b.
\end{equation*}
Then we have the following result, which plays an important role in studying Problem \eqref{B}.
\begin{theorem}\label{thm:exist-GR}
Assume $a(p)>0>b(p)$ for any $p\in \R$.
\begin{itemize}
\item[\rm (i)] If $a_0 > -b_0$, then Problem \eqref{TW-p} with $h=+\infty$ has a solution pair
$(c,\varphi) = (\bar{c}, \Phi(x))$. In particular, $\bar{c}$ is unique with $0<\bar{c}<\frac{\pi a^0}{2}$ while $\Phi(x)$ is unique up to a shift.
\item[\rm (ii)] If $h>0$ satisfies $a_0 h > -b_0 \sqrt{1+h^2}$, then Problem \eqref{TW-p} has a solution pair
$(c,\varphi) = (c(h), \Phi(x;h))$.  Furthermore, $c(h)$ is unique and strictly increasing in $h$ with $0<c(h)< a^0 \arctan h$ while $\Phi(x;h)$ is unique up to a shift.
\end{itemize}
\end{theorem}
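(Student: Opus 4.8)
The plan is to reduce the second-order boundary value problem \eqref{TW-p} to a first-order autonomous ODE and then run a shooting argument in the wave speed $c$. Setting $p=\varphi'$, equation \eqref{TW-p} is equivalent to
\begin{equation*}
\frac{dp}{dx}=F(p;c):=\frac{\bigl(c-b(p)\sqrt{1+p^2}\bigr)(1+p^2)}{a(p)},
\end{equation*}
which is autonomous in $x$. Since $b<0$, for every $c\geq 0$ we have $c-b(p)\sqrt{1+p^2}>0$, hence $F(p;c)>0$ for all $p$; thus any solution $p(x)$ is strictly increasing and never stalls. Separating variables, an increasing solution with $p(-1)=-h$ and $p(1)=h$ exists precisely when
\begin{equation*}
G(c,h):=\int_{-h}^{h}\frac{a(p)\,dp}{\bigl(c-b(p)\sqrt{1+p^2}\bigr)(1+p^2)}=2,
\end{equation*}
the improper integral (for $h=+\infty$) converging at $p=\pm\infty$ because its integrand decays like $|p|^{-3}$, using that $a/(-b)$ is bounded. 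For fixed $c$ the trajectory $p(x)$ is unique by Picard, and a further integration recovers $\varphi$ up to an additive constant, so the whole problem collapses to solving $G(c,h)=2$, with $\Phi$ automatically unique up to a shift.

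The core is an elementary analysis of $G(\cdot,h)$. First I would note that $G(\cdot,h)$ is continuous and strictly decreasing on $[0,\infty)$ (differentiate under the integral sign: $\partial_c G<0$) with $\lim_{c\to\infty}G(c,h)=0$; hence $G(c,h)=2$ has at most one root, giving uniqueness of $c(h)$ and of $\bar c$. For the upper bounds I would use $c-b(p)\sqrt{1+p^2}>c$ together with $a\leq a^0$ to get the strict estimate $G(c,h)<\frac{a^0}{c}\int_{-h}^{h}\frac{dp}{1+p^2}$; evaluating the elementary integral and imposing $G=2$ yields $c(h)<a^0\arctan h$ in part (ii) and, as $h\to\infty$, $\bar c<\frac{\pi a^0}{2}$ in part (i). For existence and positivity of the root I would bound $G(0,h)$ from below via $a\geq a_0$ and $-b\leq -b_0$:
\begin{equation*}
G(0,h)\geq \frac{a_0}{-b_0}\int_{-h}^{h}\frac{dp}{(1+p^2)^{3/2}}=\frac{a_0}{-b_0}\cdot\frac{2h}{\sqrt{1+h^2}},
\end{equation*}
so $G(0,h)>2$ whenever $a_0 h>-b_0\sqrt{1+h^2}$ (the hypothesis of (ii)); letting $h\to\infty$ the same computation gives $G(0,\infty)\geq \frac{2a_0}{-b_0}>2$ under $a_0>-b_0$ (the hypothesis of (i)). Combined with strict monotonicity and $G(\infty,h)=0$, this produces a unique positive root in each case.

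Monotonicity of $c(h)$ in $h$ then follows by implicit differentiation of $G(c(h),h)\equiv 2$: writing $g$ for the (strictly positive) integrand of $G$, we have $\partial_h G=g(h)+g(-h)>0$ while $\partial_c G<0$, so $c'(h)=-\partial_h G/\partial_c G>0$.

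I expect the main obstacle to be the rigorous treatment of the limiting case $h=+\infty$. There one must justify convergence of the improper integral uniformly in $c$ (to obtain continuity of $G(\cdot,\infty)$ and its limits at $c=0$ and $c=\infty$), verify that the resulting trajectory $p(x)$ actually blows up to $\pm\infty$ as $x\to\mp 1$ so that the boundary condition $\varphi'(\pm1)=\pm\infty$ is attained in the correct sense, and confirm that $\Phi$ has finite height, i.e. $\int_0^1 p\,dx<\infty$, which follows from the asymptotics $p\sim C(1-x)^{-1/2}$ near $x=1$ extracted from $F$. The finite-$h$ statements, by contrast, are routine once $G$ is introduced.
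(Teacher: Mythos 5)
Your proposal is correct and follows essentially the same route as the paper: your function $G(c,h)$ is exactly the paper's width function $d_h(c)=X_h^+(c)-X_h^-(c)$ obtained by separating variables in $\psi=\varphi'$, and the existence and uniqueness of the speed come from the same monotonicity-in-$c$ argument ($G$ strictly decreasing, $G(0,h)>2$ under the stated hypotheses via the bound $a_0/(-b_0)$, $G\to 0$ as $c\to\infty$), with identical upper bounds $c(h)<a^0\arctan h$ and $\bar c<\pi a^0/2$. The only cosmetic difference is that the paper additionally verifies blow-up of $\varphi'$ and finite height via comparison with constant-coefficient cup solutions and a tangent-circle argument, whereas you extract the same information directly from the separated integral and the asymptotics of $p$ near the endpoint, which is equally valid.
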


Note that a
solution pair $(c,\varphi)$ to Problem \eqref{TW-p} gives a traveling wave to  Problem \eqref{A} in the form of $u=\varphi(x) + ct$. In this paper,
a traveling wave $\varphi(x;c)+ct$ derived from  Theorem \ref{thm:exist-GR} is called a {\it cup-like traveling wave} since
the graph of $\varphi$ is similar to a cup with finite height.
With the help of this kind of solutions, we solve Problem \eqref{B}. More precisely,

\begin{theorem}\label{thm:main}
Suppose that $a,b$ are two even functions with
$a(p)>0>b(p)$ for $p\in \R$ and $a_0 > -b_0$.
If $u_0$ is a $C^1$-function satisfying compatibility conditions, then Problem \eqref{B} has a time-global solution $u(x,t)$. Moreover, if $u_0$ is large enough, then $u(x,t)$ tends to infinity as $t\rightarrow +\infty$, and its profile satisfies
\begin{equation}\label{1.5thereom2secnd}
 u(x,t+s) -u(0,s) \to  \Phi(x) + \bar{c} t, \quad \mbox{as}\ s\to +\infty,
\end{equation}
in the topology of $C^{2,1}_{\text{loc}}\left( (-1,1)\times \R\right),$ where $(\bar{c},\Phi(x))$ is the solution pair in Theorem \ref{thm:exist-GR}/(i).
\end{theorem}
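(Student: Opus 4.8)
The plan is to combine a priori estimates for global existence with a compactness-plus-identification argument for the long-time convergence. For local well-posedness I would treat the equation in \eqref{B} as a quasilinear, uniformly parabolic equation with the smooth (though $u$-dependent) boundary condition $u_x(\pm1,t)=\pm u(\pm1,t)$; for $C^1$ initial data satisfying the compatibility conditions, standard quasilinear parabolic theory gives a unique short-time solution. To continue it to all $t>0$ I need bounds that rule out blow-up: on each finite time interval a sup/inf bound on $u$ (obtained by comparison, using $b<0$ to bound from above and the finite-$h$ cup-like waves of Theorem \ref{thm:exist-GR}/(ii) to bound from below), together with the uniform interior gradient estimate, which keeps the equation uniformly parabolic on every $[-1+\delta,1-\delta]$. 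Since the boundary condition expresses $u_x(\pm1,t)$ through $u(\pm1,t)$, controlling $u$ and the interior gradient suffices to continue the solution, producing a time-global $u$.

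For the growth $u\to\infty$ when $u_0$ is large, I would use the finite-$h$ cup-like traveling waves $\Phi(x;h)+c(h)t$ from Theorem \ref{thm:exist-GR}/(ii) as moving subsolutions. Fitting one beneath the (large) graph of $u_0$ and invoking the comparison principle, the solution stays above a profile moving upward at the positive speed $c(h)$, so $\min_x u(\cdot,t)\to+\infty$; letting $h\to\infty$ shows the asymptotic speed is governed by $\bar c$.

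For the convergence \eqref{1.5thereom2secnd} I first set up compactness. Write $v^s(x,t):=u(x,t+s)-u(0,s)$, normalized so that $v^s(0,0)=0$. The uniform interior gradient estimate renders the equation uniformly parabolic on compact subsets of $(-1,1)$, so parabolic Schauder theory yields $C^{2,\alpha}_{\loc}$ bounds on $\{v^s\}$ that are uniform in $s$. Hence $\{v^s\}$ is precompact in $C^{2,1}_{\loc}((-1,1)\times\R)$, and along any sequence $s_j\to\infty$ a subsequence converges to an eternal solution $v^\infty$ of the PDE in \eqref{B} on $(-1,1)\times\R$ with $v^\infty(0,0)=0$.

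It remains to identify $v^\infty=\Phi(x)+\bar c\,t$, and this is the main obstacle. The difficulty is precisely the unbounded boundary slopes: since $u_x(\pm1,t)=\pm u(\pm1,t)\to\pm\infty$, the genuine boundary condition is not satisfied exactly by any finite-$h$ barrier. My plan is a squeezing argument: trap $u(\cdot,t)-u(0,t)$ between finite-$h$ cup-like profiles $\Phi(\cdot;h)$ with $h\to\infty$, positioned and shifted vertically so that at the relevant heights their boundary slopes $\pm h$ straddle $\pm u(\pm1,t)$; by the monotone dependence $c(h)\uparrow\bar c$ and $\Phi(\cdot;h)\to\Phi$ supplied by Theorem \ref{thm:exist-GR}, both barriers collapse onto the $h=+\infty$ profile $\Phi$ of Theorem \ref{thm:exist-GR}/(i). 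To make the trapping rigorous I expect to need a zero-number (Sturmian) argument controlling the intersections of $u(\cdot,t)$ with the shifted traveling waves $\Phi(\cdot;h)+c(h)t$, which forces the shape $u(\cdot,t)-u(0,t)$ to converge to $\Phi$. Substituting the limiting shape into the equation at $x=0$ (where $\Phi'(0)=0$ by the evenness of $a,b$) then gives $\partial_t u(0,t)\to a(0)\Phi''(0)+b(0)=\bar c$, hence $u(0,t+s)-u(0,s)\to\bar c\,t$; combining the shape and speed limits yields \eqref{1.5thereom2secnd}. Finally, the uniqueness of $(\bar c,\Phi)$ in Theorem \ref{thm:exist-GR}/(i) upgrades the subsequential limit to the full limit as $s\to\infty$.
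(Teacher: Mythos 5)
Your skeleton matches the paper's: finite-time $L^\infty$ and gradient bounds plus comparison with cup-like traveling waves for global existence, finite-$h$ subsolutions for the growth $u\to+\infty$, and compactness plus identification of the eternal limit for \eqref{1.5thereom2secnd}. But the two ingredients you invoke for the convergence step are precisely the content of the paper's Sections \ref{sec4}--\ref{sec5}, and as stated your plan has a gap there. First, the \emph{uniform-in-$t$} interior gradient estimate is not available off the shelf: Lemma \ref{gradientestm} only gives a bound growing like $\bar c T$, and the time-uniform bound is obtained in the paper only through a detour: one first takes a special symmetric, convex initial datum $\rho$ built from $\Phi$, proves $u_{xx}(\cdot,\cdot;\rho)>0$, and then establishes the two-sided pointwise estimates $\Phi'(x;h_0)<u_x(x,t;\rho)<\Phi'(x)$ by sliding the barrier families $\Phi(x)+\bar c t+r$ and $\Phi(x;h_0)+c(h_0)t+h$ and counting intersections. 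That sliding/zero-number argument uses the convexity and evenness of $u(\cdot,t;\rho)$ essentially (to know each barrier touches the solution at exactly two symmetric points $\pm Y(t)$); your proposal to trap the \emph{general} solution directly between finite-$h$ profiles has no such control of the intersection set, and without the uniform gradient bound the compactness step cannot even begin. The paper then transfers the estimate to general $u_0$ by sandwiching $u(\cdot,t;u_0)$ between $u(\cdot,t;\rho)$ and $u(\cdot,t+T;\rho)$ (Lemmas \ref{lem4.3}--\ref{lem:interior-g-est}).

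Second, the identification of the limit: your step ``substituting the limiting shape into the equation at $x=0$'' presupposes that the shape limit is exactly $\Phi$, which is what must be proved. The paper instead shows the subsequential limit $\mathcal W$ is squeezed between $\Phi+\bar c t$ and $\Phi+\bar c t+\bar c T$, and then applies the Du--Matano zero-number/$\omega$-limit theorem to $\theta=\arctan u_x$ to conclude $\mathcal W_{xt}\equiv 0$, so $\mathcal W$ is a traveling wave whose speed and profile are then pinned by the squeeze and by Theorem \ref{thm:exist-GR}. You should either reproduce this two-stage structure (symmetric convex reference solution first, then sandwich) or supply a genuinely different mechanism for both the uniform interior gradient bound and the rigidity of the eternal limit. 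A minor additional point: the facts $c(h)\uparrow\bar c$ and $\Phi(\cdot;h)\to\Phi$ as $h\to\infty$ are not stated in Theorem \ref{thm:exist-GR} and would need a short argument from \eqref{find-ch}.
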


This paper is organized  as follows. Section \ref{sec2} is devoted to traveling waves, in which Theorem \ref{thm:exist-GR} is proved. In Section \ref{sec3}, we present a
priori estimates  and show the time-global existence for the solution to Problem  \eqref{B}.
We devote Section \ref{sec4} to the symmetric solutions to Problem \eqref{B}. First we obtain
precise estimates for $u_x$ by the zero number argument, and then we show the convergence of $u$ to
$\Phi(x)+\bar{c}t$.
The general solutions are considered in Section  \ref{sec5}, where Theorem \ref{thm:main} is proved.

\section{Traveling waves}\label{sec2}
In this section, we study Problem \eqref{TW-p} and prove Theorem \ref{thm:exist-GR}. Note that every solution pair $(c,\varphi)$ to Problem \eqref{TW-p} induces a  traveling waves $u=\varphi+ct$ to Problem \eqref{A}.
For convenience, the notation  \eqref{TW-p}$_1$ is used to denote the equation in \eqref{TW-p}.
Now we recall the following result.

\begin{lemma}[Lou \cite{Lou1}]\label{lem:homo}
Assume $a$ and $-b$ are positive constants. Then for any $c>0$, the equation \eqref{TW-p}$_1$
has a solution
$\varphi = \Phi(x;a,b,c)$ with a cup-like graph, that is, $\Phi(x;a,b,c)$ is defined in $[-X, X]$ for some $X = X(a,b,c)>0$ staisfying
$$
\Phi(0)=0,\quad \Phi(-x)=\Phi(x) \mbox{ and } \Phi''(x)>0 \mbox{ in } [-X,X],\quad \Phi'(\pm X) = \pm \infty,\quad \Phi(\pm X) <\infty.
$$
\end{lemma}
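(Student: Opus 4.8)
The plan is to reduce Problem \eqref{TW-p}$_1$ to a first-order autonomous ODE for the slope function $p := \varphi'$ and to integrate it explicitly by separation of variables. Since $a$ and $-b$ are positive constants, the equation can be rewritten as $p' = F(p)$, where
$$
F(p) := \frac{(1+p^2)\bigl(c - b\sqrt{1+p^2}\bigr)}{a}.
$$
Because $c > 0$ and $b < 0$, the factor $c - b\sqrt{1+p^2} = c + |b|\sqrt{1+p^2}$ is strictly positive, so $F(p) > 0$ for all $p \in \R$. I would impose the initial condition $p(0) = 0$; this choice will produce the symmetry $\Phi(-x) = \Phi(x)$. Note that the sign $F > 0$ already forces $\Phi'' = p' > 0$ throughout, giving the required convexity.

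Next I would separate variables to obtain, for $x \ge 0$,
$$
x = \int_0^{p(x)} \frac{a\, ds}{(1+s^2)\bigl(c - b\sqrt{1+s^2}\bigr)}.
$$
Define $X := \int_0^{+\infty} \frac{a\, ds}{(1+s^2)(c - b\sqrt{1+s^2})}$. Since the integrand behaves like $a/(|b|s^3)$ as $s \to +\infty$, this improper integral converges, so $X < \infty$; moreover $p(x)$ increases monotonically to $+\infty$ as $x \to X^-$. This yields a strictly convex solution on $[0, X)$ with $\varphi'(X^-) = +\infty$. The finiteness of the height is checked similarly: writing $d\varphi = p\, dx$ and changing variables gives
$$
\Phi(X) - \Phi(0) = \int_0^{+\infty} \frac{a\, s\, ds}{(1+s^2)\bigl(c - b\sqrt{1+s^2}\bigr)},
$$
whose integrand is $O(s^{-2})$ at infinity, hence the integral converges and $\Phi(X) < \infty$.

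Finally I would extend the solution to $[-X, X]$ and record the symmetry. Because $F$ is an even function of $p$, the function $x \mapsto -p(-x)$ solves the same ODE with the same value $0$ at $x = 0$; uniqueness for the initial value problem forces $p$ to be odd, so $\varphi' = p$ is odd and $\varphi$ is even. Normalizing $\Phi(0) = 0$ then delivers all the stated properties: $\Phi(-x) = \Phi(x)$, $\Phi'' > 0$ on $[-X, X]$, $\Phi'(\pm X) = \pm\infty$, and $\Phi(\pm X) < \infty$.

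I do not expect a genuine obstacle here; the content is entirely contained in the two elementary convergence estimates. The only conceptual point worth emphasizing is the contrast between the two tail rates: the $s^{-3}$ decay makes the horizontal span $X$ finite (so the slope reaches $\pm\infty$ at finite $x$), while the $s^{-2}$ decay makes the vertical height $\Phi(X)$ finite as well. It is precisely this simultaneous finiteness of span and height that produces a cup-shaped profile rather than the unbounded grim reaper, and keeping track of these asymptotics is the crux of the argument.
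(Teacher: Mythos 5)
Your proof is correct and self-contained: the paper itself imports this lemma from Lou \cite{Lou1} without proof, and your separation-of-variables argument — reducing to $p'=F(p)$ with $F>0$, reading off the span from $X=\int_0^\infty \frac{a\,ds}{(1+s^2)(c-b\sqrt{1+s^2})}<\infty$ and the height from the $O(s^{-2})$ tail of $\int_0^\infty \frac{a s\,ds}{(1+s^2)(c-b\sqrt{1+s^2})}$ — is exactly the device the paper deploys for the non-constant case in \eqref{psi} and \eqref{X+} of Lemma \ref{lem:prop+varphi}. No gaps; the evenness of $F$ in $p$ plus uniqueness for the IVP correctly yields the symmetry.
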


Lemma \ref{lem:homo} implies that the equation \eqref{TW-p}$_1$ with $a\equiv a_0, b\equiv b_0$ (resp.,
$a\equiv a^0, b\equiv b^0$) has a solution with cup-like graph, which is denote by
 $\Phi_0(x;c)= \Phi(x;a_0,b_0,c)$ in $[-X(a_0,b_0,c), X(a_0,b_0,c)]$
(resp., $\Phi^0(x;c)= \Phi(x;a^0,b^0,c)$ in $[-X(a^0,b^0,c), X(a^0,b^0,c)]$).
We are going to investigate  the equation \eqref{TW-p}$_1$ with non-constant $a, b$.

\begin{lemma}\label{lem:prop+varphi}Let $a,b$ be two functions with $a>0>b$.
Given any $c>0$, let $\varphi(x;c)$ be the unique solution to the equation \eqref{TW-p}$_1$ satisfying the initial conditions
\begin{equation}\label{ini-condition}
\varphi(0)=0,\quad \varphi'(0)=0.
\end{equation}
Let $(X^-(c),X^+(c))$ denote the maximal existence interval of $\varphi(x;c)$.
Then we have
\begin{itemize}
\item[\rm (1).] $\varphi''(x;c)>0$ for any $x\in [0, X^+(c))$;
\item[\rm (2).] $X(a_0, b_0,c) \leq  X^+(c) \leq  X(a^0, b^0,c)$. In particular, $\varphi'(x;c)\leq \Phi'_0(x;c)$ for any  $x\in (0, X(a_0, b_0,c))$, and
$\varphi'(x;c) \geq (\Phi^0)'(x;c)$ for any $x\in (0, X^+(c))$;
\item[\rm (3).] $\lim_{x\to X^+(c)} \varphi(x;c)\leq \frac{a^0}{-b^0}$ and $\lim_{x\to X^+(c)} \varphi'(x;c) = +\infty$;
\item[\rm (4).] $\varphi'(x;c)$ is strictly increasing in $c$ while $X^+(c)$ is strictly decreasing in $c$ with $\lim_{c\rightarrow+\infty}X^+(c)= 0$;
\item[\rm (5).] under the additional condition
\begin{equation}\label{prop-cond-right}
\int^{\infty}_0\frac{a(r)\; dr}{-b(r)(1+r^2)^{\frac{3}{2}}} >1,
\end{equation}
there holds $X^+(c) > 1$ for $0\leq c\ll 1$.
\end{itemize}
\end{lemma}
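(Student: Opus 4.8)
The plan is to eliminate the second-order term by passing to the first-order equation satisfied by the gradient $p:=\varphi'(\cdot\,;c)$. Solving \eqref{TW-p}$_1$ for $\varphi''=p'$ and using \eqref{ini-condition} gives the autonomous initial value problem
\[
p'=F(p,c):=\frac{(1+p^2)\left(c-b(p)\sqrt{1+p^2}\right)}{a(p)},\qquad p(0)=0 .
\]
Because $a>0$, $-b>0$ and $c>0$, one has $F(p,c)>0$ for every $p$, so $\varphi''=p'>0$ on the whole maximal interval; this is exactly (1). Since $p$ is strictly increasing and $F$ is smooth, the solution continues to the right until $p$ becomes unbounded; moreover $p$ cannot converge to a finite limit $\overline p$, for otherwise $p'\to F(\overline p,c)>0$ would force $p$ past $\overline p$. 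Hence $\lim_{x\to X^+(c)}\varphi'(x;c)=+\infty$ (half of (3)), and using $p$ as the independent variable I obtain the integral representations
\[
X^+(c)=\int_0^\infty\frac{dq}{F(q,c)},\qquad \lim_{x\to X^+(c)}\varphi(x;c)=\int_0^\infty\frac{q\,dq}{F(q,c)} .
\]
Everything else is an analysis of these two integrals.

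For (2), I compare $F(\cdot,c)$ with the two frozen-coefficient fields $F_0(q):=F(q,c)\big|_{(a,b)=(a_0,b_0)}$ and $F^0(q):=F(q,c)\big|_{(a,b)=(a^0,b^0)}$, whose reciprocals integrate, by Lemma \ref{lem:homo}, to the half-widths $X(a_0,b_0,c)$ and $X(a^0,b^0,c)$. The bounds $a_0\le a(q)\le a^0$ and $-b_0\ge -b(q)\ge -b^0>0$ give the pointwise sandwich $F^0(q)\le F(q,c)\le F_0(q)$, and integrating the reciprocals yields $X(a_0,b_0,c)\le X^+(c)\le X(a^0,b^0,c)$. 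Reading the same inequalities through the inverse relation $x(p)=\int_0^p dq/F$ (a larger field reaches a prescribed slope at a smaller abscissa) gives the pointwise gradient comparisons $\varphi'\le\Phi_0'$ on $(0,X(a_0,b_0,c))$ and $\varphi'\ge(\Phi^0)'$ on $(0,X^+(c))$, where the latter is legitimate because $X^+(c)\le X(a^0,b^0,c)$. In particular $X^+(c)<\infty$.

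The height bound in (3) now follows by estimating the second integral: with $a(q)\le a^0$ and $c-b(q)\sqrt{1+q^2}\ge(-b^0)\sqrt{1+q^2}$,
\[
\lim_{x\to X^+(c)}\varphi(x;c)=\int_0^\infty\frac{q\,a(q)\,dq}{(1+q^2)\left(c-b(q)\sqrt{1+q^2}\right)}\le\frac{a^0}{-b^0}\int_0^\infty\frac{q\,dq}{(1+q^2)^{3/2}}=\frac{a^0}{-b^0},
\]
the last integral being elementary. For (4) I observe that $F(q,c)$ is strictly increasing in $c$, since its $c$-coefficient $(1+q^2)/a(q)$ is positive; hence each reciprocal integrand strictly decreases in $c$, making $\varphi'(x;c)$ strictly increasing and $X^+(c)$ strictly decreasing in $c$. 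Letting $c\to+\infty$, the integrand decreases to $0$ and is dominated by the fixed integrable function $a^0/\big((-b^0)(1+q^2)^{3/2}\big)$, so $X^+(c)\to 0$. Finally, letting $c\to 0^+$ the integrand increases monotonically to $a(q)/\big(-b(q)(1+q^2)^{3/2}\big)$, whence by monotone convergence $\lim_{c\to0^+}X^+(c)=\int_0^\infty a(r)\,dr/\big(-b(r)(1+r^2)^{3/2}\big)$; under hypothesis \eqref{prop-cond-right} this exceeds $1$, so $X^+(c)>1$ for all sufficiently small $c$, which is (5).

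The content is uniformly elementary once the autonomous reduction is in place, so the difficulty lies entirely in the bookkeeping. The points I would be most careful about are: justifying that $p$ is a legitimate change of variable (which is precisely where positivity of $F$, i.e.\ (1), is used) and that the right endpoint of existence really corresponds to $p\to+\infty$; keeping every inequality between $F,F_0,F^0$ pointed in the direction dictated by the signs of $b$; and exhibiting a single $c$-independent integrable majorant so that the limits and the continuity of $X^+(c)$ in (4)--(5) are rigorous. The step genuinely worth spelling out is the pointwise gradient comparison in (2)---comparing slopes at a common $x$ rather than merely comparing blow-up abscissae---and the autonomous formulation above is exactly what makes it transparent.
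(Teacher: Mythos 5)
Your proof is correct, and for parts (1), (4) and (5) it is essentially the paper's argument: the paper also rewrites the equation as $\varphi''=\frac{1+(\varphi')^2}{a(\varphi')}[c-b(\varphi')\sqrt{1+(\varphi')^2}]$ for (1), and introduces exactly your change of variables $dx=\frac{a(\psi)\,d\psi}{(1+\psi^2)(c-b(\psi)\sqrt{1+\psi^2})}$ to prove monotonicity of $\varphi'$ and $X^+$ in $c$, the bound $X^+(c)<\frac{a^0\pi}{2c}$, and the limit $X^+(0)>1$. Where you genuinely diverge is in (2) and (3). For (2), the paper runs a classical first-contact-point comparison: it perturbs $c$ to $c-\varepsilon$, shows $\varphi''(0;c-\varepsilon)<\Phi_0''(0;c)$, assumes a first touching point $x_1$ of the derivatives and derives a contradiction from the ODE, then lets $\varepsilon\to0$; you instead get the sandwich $F^0\le F\le F_0$ pointwise in the slope variable and read off both the interval comparison and the slope comparison from monotonicity of $p\mapsto\int_0^p dq/F$. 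For the height bound in (3), the paper uses a geometric sliding-circle argument (translating a circle of radius $a^0/(-b^0)$ until it touches the graph and comparing curvatures at the contact point), whereas you compute the height exactly as $\int_0^\infty q\,dq/F(q,c)$ and bound it by $\frac{a^0}{-b^0}\int_0^\infty q(1+q^2)^{-3/2}dq=\frac{a^0}{-b^0}$. Your route is more unified and arguably cleaner, since one autonomous reduction yields everything by direct integral estimates; the paper's comparison-principle and tangency arguments are more robust in situations where an explicit first-order reduction is unavailable, and the circle argument makes the curvature interpretation of the bound visible. The cautions you flag (that the right endpoint corresponds to $p\to+\infty$, and the need for a $c$-independent integrable majorant) are exactly the points that need to be, and are, covered by your estimate $F(q,c)\ge\frac{-b^0}{a^0}(1+q^2)^{3/2}$, so there is no gap.
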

\begin{proof}
(1). The equation (\ref{TW-p})$_1$ can be re-written as
\begin{equation}\label{varphi''}
\varphi'' =\frac{ 1+ (\varphi')^2 }{a(\varphi') } \left[c- b(\varphi') \sqrt{1+ (\varphi')^2}\right].
\end{equation}
It follows from $a, c>0$ and $b<0$ that $\varphi''(x;c)>0$ in $[0,X^+(c))$.

\medskip
(2). For any small $\varepsilon>0$, consider an auxiliary problem \eqref{varphi''} with the  initial conditions \eqref{ini-condition}. Replacing  $c$  by $c-\varepsilon>0$,  we have
\begin{equation}
\varphi'' (0;c-\varepsilon)=\frac{ 1}{a(0) } \left[c-\varepsilon - b(0)\right]<\frac{ 1}{a_0 } \left[c- b_0\right]=\Phi'' _0 (0;c).
\end{equation}
Therefore, $\varphi'(x;c-\varepsilon)<\Phi'_0(x;c)$ for $0<x \ll 1$.

 We claim that $\varphi'(x;c-\varepsilon)<\Phi'_0 (x;c)$ holds in their common existence interval in $(0,\infty)$. If not,
let $x_1>0$ be the smallest number with $\varphi'(x_1;c-\varepsilon)=\Phi'_0(x_1;c)$.
 Then we also have $\varphi''(x_1;c-\varepsilon)\geq\Phi''_0(x_1;c)$.
 However,  the equation (\ref{varphi''}) (with $c$ being replaced by $c-\varepsilon$) yields
 \begin{eqnarray*}
 \varphi''(x_1;c-\varepsilon) &= & \displaystyle \frac{ 1+ (\varphi'(x_1;c-\varepsilon))^2 }{a(\varphi'(x_1;c-\varepsilon)) } \left[c -\varepsilon - b(\varphi'(x_1;c-\varepsilon)) \sqrt{1+ (\varphi'(x_1;c-\varepsilon))^2}\right]\\
  &< & \displaystyle \frac{ 1+ (\Phi'_0(x_1;c))^2 }{a_0 } \left[c- b_0\sqrt{1+ (\Phi'_0(x_1;c))^2}\right]
=\Phi''_0(x_1;c),
 \end{eqnarray*}
which is a contradiction. Hence, the claim is true and therefore,
$\Phi'_0 (x;c) \geq \varphi'(x;c)$ in their common existence interval.
This implies that the maximal existence interval of $\Phi_0(x;c)$ is not wider than that of $\varphi(x;c)$ in $(0,\infty)$,
that is, $X(a_0,b_0,c) \leq X^+(c)$. A similar argument yields that $\varphi'(x;c) \geq (\Phi^0)'(x;c)$ for $x\in [0,X^+(c)) \subset [0, X(a^0,b^0, c))$.


\medskip
(3). It follows from Lemma \ref{lem:homo} that $(\Phi^0)'(x;c)\to +\infty$ as $x\to X(a^0,b^0,c)$. Thus the result in
 the above step furnishes $\varphi'(x;c)\to +\infty$ as $x\to X^+(c)$.

Now we claim $\lim_{x\rightarrow X^+(c)}\varphi(x;c) \leq R := \frac{a^0 }{-b^0}$. Suppose by contradiction that for some small $\delta>0$,
\begin{equation}\label{var}
\lim_{x\to X^+(c)}\varphi\left(x;c\right) > R +2\delta.
\end{equation}
 Let $\rho (x)$ denote
the lower half of the circle of radius $R$  centered at $(-R,R+\delta)$, i.e.,
$$
\rho (x) := R +\delta- \sqrt{ R^2  - ( x+R)^2 },\qquad -2R  \leq x \leq 0.
$$
Clearly, this half circle (denoted by $\mathcal{C}_0$) has no contact points with the graph of  $\varphi(x;c)$ (denoted by $\mathcal{C}$).
We now move $\mathcal{C}_0$ rightward little by little till it just touches $\mathcal{C}$. More precisely, set
$$
d_1 := \max \{d>0 \mid \rho (x-d) > \varphi(x;c) \mbox{ in their common domain}\}.
$$
Then the graph of $\rho(x-d_1)$ (denoted by $\mathcal{C}_1$) lies above $\mathcal{C}$, which is  tangent to $\mathcal{C}$ at some point  $(x_2, \varphi(x_2;c))$.  Therefore, at this point, the curvature of $\mathcal{C}_1$ is not smaller than
that of $\mathcal{C}$, that is,
$$
\frac{-b^0}{a^0} =\frac{1}{R} \geq \frac{\varphi''(x_2;c)}{[1+(\varphi'(x_2;c))^2]^{3/2}} = \frac{1}{a(\varphi'(x_2;c))} \left[
\frac{c}{\sqrt{1+(\varphi'(x_2;c))^2}} - b(\varphi'(x_2;c))\right] >\frac{-b^0}{a^0},
$$
which is a contradiction. Hence, the claim is true.

\medskip
(4). Set $\psi(x):= \varphi'(x;c)$ and rewrite \eqref{TW-p}$_1$ as
\begin{equation}\label{psi}
dx=\frac{a(\psi)\; d\psi}{(1+\psi^2)\left(c-b(\psi)\sqrt{1+\psi^2}\right)}.
\end{equation}
Regard \eqref{psi} as an ODE of $x(\psi;c)$ with the initial condition $x(0)=0$.
By  the comparison principle, it is not hard to check that $x(\psi;c)$ is strictly decreasing in $c$. Therefore,
its inverse function $\psi(x) = \varphi'(x;c)$ is strictly increasing in $c$.

Next, by integrating \eqref{psi} over $x\in (0,X^+(c))$ (equivalently, over $\psi \in (0,\infty)$) we have
\begin{equation}\label{X+}
X^+(c) =\int^{\infty}_0\frac{a(r)\; dr}{(1+r^2)\left(c-b(r)\sqrt{1+r^2}\right)} <\int^{\infty}_0\frac{a^0\; dr}{c(1+r^2)}=\frac{ a^0\pi}{2c}.
\end{equation}
Thus, one gets
$$
\frac{dX^+(c) }{dc}=\int^{\infty}_0\frac{-a(r)\; dr}{(1+r^2)\left(c-b(r)\sqrt{1+r^2}\right)^2}<0,
$$
which together with \eqref{X+} furnishes
$X^+(c)\to 0$ as $c\to +\infty$.

\medskip
(5). According to (\ref{X+}) and  \eqref{prop-cond-right}, we have
$$
X^+(0)= \int^{\infty}_0\frac{a(r)\; dr}{-b(r)(1+r^2)^{\frac{3}{2}}} >1.
$$
Then the continuity implies $X^+(c)>1$ for $0<c\ll 1$, which concludes the proof.
\end{proof}

\begin{remark}
Using Lemma \ref{lem:prop+varphi}/(3),  we can even supplementally define
 $\varphi(x;c)$ at the point $x=X^+(c)$ so that $\varphi(x;c)$ is continuous in $[0,X^+(c)]$.
\end{remark}

The same argument as in Lemma \ref{lem:prop+varphi}/(4) yields
\begin{equation}\label{X-}
X^-(c) := - \int^0_{-\infty}\frac{a(r)\; dr}{(1+r^2)\left(c-b(r)\sqrt{1+r^2}\right)} > - \frac{a^0 \pi}{2 c}.
\end{equation}
 By a suitable modification to the proof of Lemma \ref{lem:prop+varphi}, one can show the following result.

\begin{lemma}\label{lem:prop-varphi}Let $a,b$ be two functions with $a > 0 > b$.
For any $c>0$, let $\varphi(x;c)$ be the unique solution to the equation \eqref{TW-p}$_1$ with the initial conditions \eqref{ini-condition} and let $(X^-(c),X^+(c))$ be the
maximal existence interval of $\varphi(x;c)$. Then we have
\begin{itemize}
\item[\rm (1).] $\varphi''(x;c)>0$ for any $x\in( X^-(c), 0]$;
\item[\rm (2).] $X(a_0, b_0,c) \leq  - X^- (c) \leq  X(a^0, b^0,c)$. In particular,  $\varphi'(x;c)\geq \Phi'_0(x;c)$ for $x\in(-X(a_0, b_0,c),0)$ while
$\varphi'(x;c) \leq (\Phi^0)'(x;c)$ for $x\in(X^-(c),0)$;
\item[\rm (3).] $\lim_{x\to X^-(c)} \varphi(x;c)\leq \frac{a^0}{-b^0}$ and $\lim_{x\to X^-(c)} \varphi'(x;c) = -\infty$;
\item[\rm (4).] $\varphi'(x;c)$ is strictly decreasing in $c$ while $X^-(c)$ is strictly increasing in $c$ with $\lim_{c\rightarrow+\infty}X^-(c)=0$;
\item[\rm (5).] under the additional condition
\begin{equation}\label{prop-cond-left}
\int^0_{-\infty}\frac{a(r)\; dr}{-b(r)(1+r^2)^{\frac{3}{2}}} >1,
\end{equation}
there holds $X^-(c) <- 1$ for $0\leq c\ll 1$.
\end{itemize}
\end{lemma}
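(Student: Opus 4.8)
The plan is to reduce the entire statement to Lemma~\ref{lem:prop+varphi} by a reflection in the $x$-variable, so that the left-hand behaviour of $\varphi(\cdot;c)$ becomes the right-hand behaviour of a companion solution associated with reflected coefficients. Concretely, I would set $w(x):=\varphi(-x;c)$ together with $\hat a(p):=a(-p)$ and $\hat b(p):=b(-p)$. A direct computation gives $w'(x)=-\varphi'(-x;c)$ and $w''(x)=\varphi''(-x;c)$, so substituting into \eqref{varphi''} and using $(\varphi'(-x))^2=(w'(x))^2$ shows that $w$ solves the equation \eqref{TW-p}$_1$ with $a,b$ replaced by $\hat a,\hat b$, subject to the same initial data $w(0)=0$, $w'(0)=0$. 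Since $\hat a>0>\hat b$, Lemma~\ref{lem:prop+varphi} applies verbatim to $w$. Moreover, the maximal existence interval of $w$ on $(0,\infty)$ is exactly $(0,-X^-(c))$; that is, the right endpoint $X^+_w(c)$ of $w$ equals $-X^-(c)$.

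The second step is bookkeeping: I would record the invariances that make the translation clean. Reflection leaves extrema unchanged, $\hat a^0=a^0$, $\hat a_0=a_0$, $\hat b^0=b^0$, $\hat b_0=b_0$, so the constant-coefficient barriers furnished by Lemma~\ref{lem:homo} are literally the same functions $\Phi_0$ and $\Phi^0$; and by that lemma they are even, hence $\Phi_0'$ and $(\Phi^0)'$ are odd. With these facts, each conclusion of Lemma~\ref{lem:prop+varphi} for $w$ transcribes into the desired conclusion for $\varphi$ on $(X^-(c),0]$. Part~(1) translates $w''>0$ into $\varphi''>0$ on $(X^-(c),0]$. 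Part~(2) gives $X(a_0,b_0,c)\le X^+_w(c)=-X^-(c)\le X(a^0,b^0,c)$, and the inequality $w'(x)\le\Phi_0'(x)$ becomes, via $w'(x)=-\varphi'(-x;c)$ and the oddness of $\Phi_0'$, the claimed $\varphi'(y;c)\ge\Phi_0'(y;c)$ for $y\in(-X(a_0,b_0,c),0)$; the $\Phi^0$ comparison is analogous. Part~(3) transfers the bound $\lim_{x\to X^+_w}w\le\tfrac{a^0}{-b^0}$ and $w'\to+\infty$ into $\lim_{x\to X^-}\varphi\le\tfrac{a^0}{-b^0}$ and $\varphi'\to-\infty$. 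For part~(4), the monotonicity of $w'$ and of $X^+_w$ in $c$ reverses under the reflection, yielding that $\varphi'$ is strictly decreasing in $c$ and that $X^-(c)$ is strictly increasing in $c$ with limit $0$.

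Finally, for part~(5) I would only need to check that the integral hypothesis transforms correctly. Under $r\mapsto -r$, and since $1+r^2$ is even, the condition \eqref{prop-cond-left} for $a,b$ is identical to \eqref{prop-cond-right} for $\hat a,\hat b$; hence Lemma~\ref{lem:prop+varphi}/(5) applied to $w$ gives $-X^-(c)=X^+_w(c)>1$, i.e. $X^-(c)<-1$, for $0\le c\ll 1$. The only genuine care needed---and thus the ``suitable modification'' alluded to before the statement---is the consistent tracking of the sign changes induced by $x\mapsto -x$ and $p\mapsto -p$, in particular the reversal of the comparison inequalities in~(2) and of the monotonicities in~(4), together with the oddness of the barrier derivatives. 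If one prefers to avoid reflection and instead adapt the proofs directly, the same two places are where vigilance is required: the barrier circle in~(3) must now be slid in from the left, and the comparison argument in~(2) must be run with the reversed orientation.
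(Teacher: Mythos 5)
Your reduction is correct, and every step of the sign bookkeeping checks out: with $w(x):=\varphi(-x;c)$, $\hat a(p):=a(-p)$, $\hat b(p):=b(-p)$ one indeed gets $w''=\frac{1+(w')^2}{\hat a(w')}[c-\hat b(w')\sqrt{1+(w')^2}]$ with $w(0)=w'(0)=0$ and right endpoint $-X^-(c)$; the extrema of $\hat a,\hat b$ coincide with those of $a,b$, the barriers $\Phi_0,\Phi^0$ of Lemma~\ref{lem:homo} are even so their derivatives are odd, and the substitution $r\mapsto-r$ turns \eqref{prop-cond-left} into \eqref{prop-cond-right} for $\hat a,\hat b$, so all five parts transfer exactly as you state. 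The paper itself gives no argument here beyond the sentence that the result follows ``by a suitable modification to the proof of Lemma~\ref{lem:prop+varphi}'', i.e.\ it implicitly asks the reader to re-run the barrier, comparison and integral computations on $(X^-(c),0]$ with the orientations reversed. Your reflection trick is a cleaner way to discharge that obligation: it replaces a re-derivation by a one-line change of variables plus the invariance observations, at the mild cost of having to note that Lemma~\ref{lem:prop+varphi} is being invoked for the reflected coefficients $\hat a,\hat b$ rather than for $a,b$ themselves (harmless, since that lemma assumes only $a>0>b$ and its conclusions are stated in terms of $a_0,a^0,b_0,b^0$, which are reflection-invariant). Both routes are sound; yours is the more economical and less error-prone one.
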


\begin{remark}\label{rem:prop-cond}The reason why we need \eqref{prop-cond-right} and \eqref{prop-cond-left}  is that
they can help
to construct the solutions to Problem \eqref{TW-p} for large $h$. On the other hand,
$a_0 > -b_0$ (in particular, $b\equiv 0$) is a sufficient condition for both \eqref{prop-cond-right} and \eqref{prop-cond-left}. In fact,
$$
\int^{\infty}_0 \frac{a(r)\; dr}{-b(r)(1+r^2)^{\frac{3}{2}}} \geq \frac{a_0}{-b_0} \int^{\infty}_0 \frac{dr}{(1+r^2)^{\frac{3}{2}}} = \frac{a_0}{-b_0}>1.
$$
\end{remark}

Based on the above two lemmas we now prove  Theorem \ref{thm:exist-GR}.

\begin{proof}[Proof of Theorem \ref{thm:exist-GR}]
 (i). According to Remark \ref{rem:prop-cond},  both \eqref{prop-cond-right} and \eqref{prop-cond-left} hold. Thus, it follows from Lemmas \ref{lem:prop+varphi}-\ref{lem:prop-varphi} that $d(c) := X^+(c)-X^-(c)$ is strictly decreasing
in $c$ and stasifies $\lim_{c\rightarrow+\infty}d(c)= 0$ and $d(c) > 2$ for $0<c \ll 1$.
Hence, there is a unique $c= \bar{c}$ with $d(\bar{c})=2$. Now \eqref{X+} together with
\eqref{X-} furnishes
$$
2 = d(\bar{c}) = X^+(\bar{c}) - X^-(\bar{c}) < \frac{a^0 \pi}{\bar{c}},
$$
which implies $0< \bar{c} < \frac{a^0 \pi}{2}$.

Denote by $\varphi(x;\bar{c})$ the unique solution to the equation \eqref{TW-p}$_1$ with the initial conditions \eqref{ini-condition} (where $c=\bar{c}$).
Set $\bar{x} := 1-  X^+(\bar{c})$ and $\Phi (x) := \varphi(x -\bar{x}; \bar{c})$. Then $\Phi(x)$ is a solution to
Problem  \eqref{TW-p} with $h=+\infty$. In particular, $\Phi$
is unique up to a vertical shift because  for any $f\in \R$,
$\Phi(x)+f$ is a solution to Problem  \eqref{TW-p}.

\smallskip

(ii). Integrating \eqref{psi} over $\psi\in (0,h)$ and $\psi\in (-h,0)$, respectively,  we have
\begin{equation*}\label{def-Xhc}
X^+_h (c) := \int^h_{0}\frac{a(r)\; dr}{(1+r^2)\left(c-b(r)\sqrt{1+r^2}\right)} , \quad
X_{h}^-(c) := - \int^0_{-h}\frac{a(r)\; dr}{(1+r^2)\left(c-b(r)\sqrt{1+r^2}\right)}.
\end{equation*}
From \eqref{psi} one can derive $\varphi'(X^\pm_h(c);c)= \pm h$.
Moreover, it is not hard to check that  $d_h (c) := X_{h}^+(c) - X_{h}^-(c)$ is strictly decreasing in $c$ and satisfies $\lim_{c\rightarrow+\infty}d_h (c)= 0$.
Note that the assumption implies
$$
d_h (0) = X^+_h (0) - X^-_h (0) > \frac{2a_0 h}{-b_0 \sqrt{1+h^2}}> 2.
$$
Thus,
there exists a unique $c= c(h)$ such that
\begin{equation}\label{find-ch}
d_h (c(h))= \int^h_{-h}\frac{a(r)\; dr}{(1+r^2)\left(c(h)-b(r)\sqrt{1+r^2}\right)} = 2,
\end{equation}
which implies that  $c(h)$ is strictly increasing in $h$. Moreover, we have
\[
d_h (c)\leq 2\max\{X^+_h (c), -X^-_h (c)\} <  2\frac{a^0 \arctan h}{c},
\]
which together with (\ref{find-ch}) yields $0 < c(h) < a^0 \arctan h$.

Finally, set $\tilde{x}:= 1- X^+_h (c(h)) $ and $\Phi(x;h) := \varphi(x - \tilde{x}; c(h))$. Then the domain
of $\Phi(x;h)$ contains $[-1,1]$, and
$$
\Phi'(\pm 1; h) = \varphi' (X^\pm_h (c(h)); c(h)) = \pm h.
$$
Hence,  $(c,\varphi) = (c(h), \Phi(x;h))$ is a solution pair to Problem  \eqref{TW-p},
which completes the proof.
\end{proof}

\section{Global well-posedness}\label{sec3}
In the sequel, we always assume that $a_0 > -b_0$.
\begin{definition}
A smooth function $\underline{u}(x,t)$ called a {\it lower solution to Problem (\ref{B})} if it satisfies
$$
\left\{
 \begin{array}{llll}
\displaystyle \underline{u}_t\leq a(\underline{u}_x) \frac{\underline{u}_{xx}}{1+\underline{u}^2_{x}} + b(\underline{u}_x)
\sqrt{1+ \underline{u}_x^2},\qquad -1<x<1,\ t>0,\\
\\
\underline{u}_x(1,t)\leq \underline{u}(1,t),\quad\underline{u}_x(-1,t)\geq -\underline{u}(-1,t),\qquad t>0.
 \end{array}
 \right.
$$
 A smooth function $\overline{u}(x,t)$ is called an {\it upper solution
of Problem \eqref{B}} if it satisfies the reverse inequalities.
\end{definition}

In order the study the asymptotic behavior of the solution $u$ to Problem \eqref{B},
we now present some sufficient conditions for  $\lim_{t\rightarrow+\infty}u(x,t)=+\infty$. Let $\varphi(x;0)$ be the solution to the equation
\eqref{TW-p}$_1$ with the initial conditions \eqref{ini-condition} (where $c=0$).  Lemmas \ref{lem:prop+varphi} and \ref{lem:prop-varphi} yield $\min\{X^+(0), -X^-(0)\} >1$ (since
$a_0 > -b_0$). Hence $\varphi(x;0)$ is well-defined over $[-1,1]$.
Let $M\in \R$ be the smallest number such that
\begin{equation}\label{def-M}
\varphi'(1;0)\leq \varphi(1;0)+M,\qquad \varphi'(-1;0)\geq -[\varphi(-1;0)+M].
\end{equation}

Recall that $u(x,0)=u_0(x)$ (see Problem \eqref{B}). Thus
the main result in the section reads as follows.
\begin{theorem}\label{thm:global}
Assume $a_0 >-b_0$. If $u_0(x)\in C^1([-1,1])$ satisfies
\begin{equation}\label{cond-u0}
u'_0(\pm 1) = \pm u_0(\pm 1),\quad u_0(x)> \varphi(x;0)+M \mbox{ in } [-1,1],
\end{equation}
then there is  a unique time-global classical solution $u(x,t)$ to Problem \eqref{B}. Moreover, $u(x,t)\to +\infty$
as $t\to +\infty$.
\end{theorem}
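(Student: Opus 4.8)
The plan is to combine standard local parabolic theory with a comparison principle and explicit barriers built from the travelling waves of Section~\ref{sec2}. Since $a>0$, the equation in \eqref{B} is uniformly parabolic on any region where $u_x$ is bounded, its coefficients are smooth, and the boundary relations $u_x(\pm1,t)=\pm u(\pm1,t)$ are linear oblique (Robin) conditions. Hence, once the compatibility condition $u_0'(\pm1)=\pm u_0(\pm1)$ from \eqref{cond-u0} holds, the classical quasilinear theory (Schauder/fixed-point on a short time interval) yields a unique classical solution $u$ on a maximal interval $[0,T_{\max})$. The theorem then reduces to three points: a comparison principle for sub/supersolutions; uniform-in-$t$ two-sided $C^0$ bounds on each slab $[-1,1]\times[0,T]$, which upgrade to boundary gradient bounds (directly from $u_x(\pm1)=\pm u(\pm1)$) and then to interior gradient and higher-order bounds, forcing $T_{\max}=+\infty$; and a lower barrier that diverges, giving $u\to+\infty$.

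First I would record the comparison principle: if $\underline u$ and $\overline u$ are, respectively, a lower and an upper solution with $\underline u(\cdot,0)\le\overline u(\cdot,0)$, then $\underline u\le\overline u$ while both exist. This is a maximum-principle argument; the only nonstandard point is the boundary, where at a would-be positive maximum of $\underline u-\overline u$ located at $x=1$ (resp.\ $x=-1$) the oblique conditions force the spatial derivative of $\underline u-\overline u$ to have the wrong sign, contradicting the Hopf lemma.

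For the divergence $u\to+\infty$ I would use the cup-like travelling waves $\varphi(\cdot;c)$ of Lemma~\ref{lem:prop+varphi}. Because $a_0>-b_0$, Remark~\ref{rem:prop-cond} makes \eqref{prop-cond-right}--\eqref{prop-cond-left} hold, so by parts (5) of Lemmas~\ref{lem:prop+varphi}--\ref{lem:prop-varphi} the profile $\varphi(\cdot;c)$ is defined on $[-1,1]$ with finite slopes $\varphi'(\pm1;c)$ for all $0\le c\ll1$. Mimicking \eqref{def-M}, let $M_c$ be the smallest number with $\varphi'(1;c)\le\varphi(1;c)+M_c$ and $\varphi'(-1;c)\ge-[\varphi(-1;c)+M_c]$; by continuity $M_c\to M$ as $c\to0^+$. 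Then $\underline u(x,t):=\varphi(x;c)+ct+M_c$ solves the PDE exactly, and since $\varphi'(\pm1;c)$ are constant in $t$ while $\pm(\varphi(\pm1;c)+ct+M_c)$ move monotonically in the admissible direction, the two lower-solution boundary inequalities, being valid at $t=0$ by the choice of $M_c$, persist for all $t\ge0$. Choosing $c$ small enough that $\varphi(x;c)+M_c\le u_0(x)$ on $[-1,1]$ (possible since $u_0>\varphi(\cdot;0)+M$ strictly and $M_c\to M$), comparison gives $u(x,t)\ge\varphi(x;c)+ct+M_c\to+\infty$; the limiting choice $c=0$ already records the stationary lower bound $u\ge\varphi(\cdot;0)+M$.

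To prevent finite-time blow-up I would use the finite-$h$ cup waves $(c(h),\Phi(\cdot;h))$ of Theorem~\ref{thm:exist-GR}/(ii): set $\overline u(x,t):=\Phi(x;h)+c(h)t+K$, an exact travelling wave, so the supersolution PDE inequality holds with equality. Since $\Phi'(\pm1;h)=\pm h$ while $\Phi(\pm1;h)$ stays bounded (by Lemma~\ref{lem:prop+varphi}/(3)) and $0<c(h)<a^0\arctan h$ is bounded, the supersolution boundary inequalities $\Phi'(1;h)\ge\overline u(1,t)$ and $\Phi'(-1;h)\le-\overline u(-1,t)$ hold up to a time $T_h$ with $T_h\to+\infty$ as $h\to+\infty$; fixing $K$ so that $\Phi(\cdot;h)+K\ge u_0$ initially, for every finite $T$ one can pick $h$ large with $T_h>T$. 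Comparison then bounds $u$ above on $[0,T]$, and with the lower bound this confines $u$ to a compact range, yielding the gradient and parabolic estimates that continue the solution past $T$; as $T$ is arbitrary, $T_{\max}=+\infty$. The main obstacle I anticipate is the bookkeeping in this last step: verifying that the supersolution boundary inequalities survive up to a time tending to infinity (i.e.\ that the boundary slope $h$ outgrows the linearly rising height $c(h)t$), while simultaneously arranging the initial ordering $\Phi(\cdot;h)+K\ge u_0$ without spoiling $T_h\to+\infty$.
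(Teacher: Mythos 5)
Your proposal is correct and, in its essentials, follows the paper's route: the lower bound via the slow cup-waves $\varphi(\cdot;c)+ct+M_c$ with $0<c\ll1$ is exactly the left half of the paper's Lemma \ref{bound-est}, and the continuation argument (boundary gradient bound read off directly from $u_x(\pm1,t)=\pm u(\pm1,t)$ once $\|u\|_{L^\infty}$ is controlled on a slab, then the maximum principle for $u_x$, then standard parabolic theory) is precisely Lemma \ref{gradientestm} and the paper's proof of the theorem. The one genuine difference is the upper barrier: the paper takes the single infinite-slope wave $\overline u=\Phi(x)+\bar c t+\|u_0\|_{L^\infty}$ from Theorem \ref{thm:exist-GR}/(i), whose boundary inequalities hold trivially (and for all time) because $\Phi'(\pm1)=\pm\infty$, whereas you use the finite-$h$ waves $\Phi(\cdot;h)+c(h)t+K$ of part (ii), valid only up to a time $T_h$ but with $T_h\to\infty$ since $c(h)<a^0\pi/2$ is bounded while $h$ grows and $\Phi(\pm1;h)$ stays bounded by Lemma \ref{lem:prop+varphi}/(3). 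Your version costs some bookkeeping but sidesteps the (unaddressed) technicality that the paper's $\Phi$ is not $C^1$ up to $x=\pm1$ and so is not literally a smooth upper solution in the sense of the Definition; both versions deliver the slab-wise $L^\infty$ bound, which is all the continuation argument needs. One small imprecision: in your comparison-principle sketch, at a \emph{positive} maximum of $\underline u-\overline u$ at $x=1$ the boundary relations only give $w_x(1)\le w(1)$, which does not yet contradict Hopf; the contradiction appears cleanly if you run the argument at the first time the maximum reaches zero, where $w_x(1)\le w(1)=0$ against $w_x(1)>0$.
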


To prove this theorem we need some a priori estimates. The first one is the $L^{\infty}$-estimate.
\begin{lemma}\label{bound-est}
Let $u(x,t)$ be the classical solution in $[0,T]$ to Problem \eqref{B} with $u_0(x)$ satisfying \eqref{cond-u0}.
Then there exist positive constants $c_0,C_1,C_2>0$ (independent of $T$) such that
 \begin{equation}\label{3.1}
 c_0 t -C_1 \leq u(x,t)\leq \bar{c} t + C_2,\qquad x\in[-1,1],\ t\in[0,T],
 \end{equation}
 where $\bar{c}$ is the constant in Theorem \ref{thm:exist-GR}/(i).
\end{lemma}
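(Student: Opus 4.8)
The plan is to derive both inequalities in \eqref{3.1} from the comparison principle, using as barriers the traveling waves furnished by Theorem \ref{thm:exist-GR}. The crucial point is that these waves satisfy the interior equation \eqref{TW-p}$_1$ \emph{with equality}, so that $\Phi(x)+\bar c\,t$ and $\Phi(x;h)+c(h)\,t$ are exact solutions of the parabolic equation in \eqref{B}; only their behaviour at $x=\pm1$ must be reconciled with the Robin condition $u_x(\pm1,t)=\pm u(\pm1,t)$, and it is precisely here that the two bounds are obtained in opposite ways.

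For the upper bound I would take $\overline u(x,t):=\Phi(x)+\bar c\,t+C_2$, where $(\bar c,\Phi)$ is the pair of Theorem \ref{thm:exist-GR}/(i). By Lemma \ref{lem:prop+varphi}/(3) the cup-like profile $\Phi$ is bounded on $[-1,1]$, so $C_2:=\max_{[-1,1]}(u_0-\Phi)$ is finite and $\overline u(\cdot,0)\ge u_0$. In the interior $\overline u$ solves the equation exactly, while at the boundary $\Phi'(\pm1)=\pm\infty$, so the reverse boundary inequalities $\overline u_x(1,t)=+\infty\ge\overline u(1,t)$ and $\overline u_x(-1,t)=-\infty\le-\overline u(-1,t)$ hold trivially; thus $\overline u$ is an upper solution. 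The vertical tangents are exactly what make the comparison work: should $w:=\overline u-u\ge0$ first vanish at a boundary point, say $x=1$, then $w$ would have a minimum on $[-1,1]$ at the right endpoint, forcing $\partial_x w(1^-)\le0$; but $\partial_x\overline u(1^-)=+\infty$ against the finite slope $u_x(1,t)=u(1,t)$ gives $\partial_x w(1^-)=+\infty$, a contradiction. An interior first contact is ruled out by the strong maximum principle applied to the linear parabolic equation satisfied by $w$. Hence $u\le\overline u\le\bar c\,t+\big(C_2+\max_{[-1,1]}\Phi\big)$, which is the right-hand inequality of \eqref{3.1}.

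The lower bound is more delicate and uses a wave of \emph{finite} slope. First note that \eqref{cond-u0}--\eqref{def-M} are arranged so that $\psi_0(x):=\varphi(x;0)+M$ is a stationary lower solution: it solves \eqref{TW-p}$_1$ with $c=0$ in the interior, and the very definition \eqref{def-M} of $M$ makes it satisfy the boundary inequalities of a lower solution, while \eqref{cond-u0} gives $u_0>\psi_0$. Comparison then yields $u\ge\psi_0\ge M$, but this is stationary. To extract linear growth I would perturb $\psi_0$ to positive speed: for small $c_0>0$ let $\Phi(\cdot;h_0)$ be the wave of Theorem \ref{thm:exist-GR}/(ii) with $c(h_0)=c_0$, and put $\underline u(x,t):=\Phi(x;h_0)+c_0\,t+D$. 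This is again an exact interior solution; its boundary slopes are the \emph{fixed} finite numbers $\pm h_0$, whereas its boundary heights $\Phi(\pm1;h_0)+c_0\,t+D$ increase in $t$. Consequently, with $D:=h_0-\min\{\Phi(1;h_0),\Phi(-1;h_0)\}$ the lower-solution boundary inequalities $\underline u_x(1,t)=h_0\le\underline u(1,t)$ and $\underline u_x(-1,t)=-h_0\ge-\underline u(-1,t)$ hold for \emph{every} $t\ge0$. As $c_0\downarrow0$ the moving profile $\underline u(\cdot,0)$ tends to a stationary lower solution, and since $u_0-\psi_0$ has a strictly positive minimum, a continuity argument should give $\underline u(\cdot,0)\le u_0$ for all sufficiently small $c_0$. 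Comparison then gives $u(x,t)\ge\underline u(x,t)\ge c_0\,t-C_1$ with $C_1:=-\min_{[-1,1]}\Phi(\cdot;h_0)-D$ (replaced by a positive constant if this is non-positive), completing \eqref{3.1}.

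I expect the main obstacle to lie in the boundary analysis on both sides. For the upper bound one must make rigorous the claim that the vertical tangents of $\Phi$ forbid boundary contact, most cleanly by approximating $\overline u$ from above or by a barrier slightly steeper than $u$ near $x=\pm1$. For the lower bound the heart of the matter is the compatibility, for one and the same finite-slope wave of small positive speed, of the Robin condition (handled above by the finite-slope-versus-growing-height mechanism) with the initial ordering $\underline u(\cdot,0)\le u_0$; this forces a careful comparison between the small-speed moving profile and the stationary lower solution singled out by \eqref{def-M}, which is where the structural hypothesis $a_0>-b_0$ --- ensuring via Lemmas \ref{lem:prop+varphi}--\ref{lem:prop-varphi} that $\varphi(\cdot;0)$ and the small-speed waves span $[-1,1]$ --- is indispensable.
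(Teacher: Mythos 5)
Your upper bound is essentially the paper's argument: the paper also takes $\overline u=\Phi(x)+\bar c\,t+\mathrm{const}$ (with $\|u_0\|_{L^\infty}$ as the shift, using $\Phi\ge0$) and relies on the vertical tangents $\Phi'(\pm1)=\pm\infty$ to make the boundary inequalities of an upper solution trivial. The problem is your lower bound, which has a genuine gap. You build the moving subsolution from the \emph{finite-slope} waves of Theorem \ref{thm:exist-GR}/(ii), choosing ``$h_0$ with $c(h_0)=c_0$'' for small $c_0$ and then letting $c_0\downarrow0$. But $c(h)$ is only defined for $h>h_*$ (where $a_0h_*=-b_0\sqrt{1+h_*^2}$), it is increasing, and its infimum need not be $0$: from \eqref{find-ch}, $c(h)\to0$ would force $d_{h_*}(0)=\int_{-h_*}^{h_*}\frac{a(r)\,dr}{-b(r)(1+r^2)^{3/2}}=2$, whereas in general $d_{h_*}(0)>2$ (equality essentially only when $a,b$ are constants). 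So arbitrarily small speeds are typically not attained by this family, and the limit $c_0\downarrow0$ you invoke is vacuous. Moreover, even when a limit exists, the profiles $\Phi(\cdot;h_0)+D$ with your $D=h_0-\min\{\Phi(\pm1;h_0)\}$ converge to $\Phi(\cdot;h_*)+h_*-\Phi(1;h_*)$, which is \emph{not} the function $\varphi(\cdot;0)+M$ singled out by \eqref{def-M}; hypothesis \eqref{cond-u0} only places $u_0$ above the latter, so nothing forces $\underline u(\cdot,0)\le u_0$. You cannot repair this by lowering $D$, since then the Robin inequality $\underline u_x(1,0)\le\underline u(1,0)$ fails at $t=0$, and you cannot start the comparison at a later time without already knowing that $u$ grows.

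The paper avoids Theorem \ref{thm:exist-GR}/(ii) entirely here and perturbs in the speed parameter of the \emph{same} normalized family $\varphi(\cdot;c)$ (the solution of \eqref{TW-p}$_1$ with \eqref{ini-condition}) that defines $M$. The strict inequality in \eqref{cond-u0} on the compact interval gives $u_0\ge\varphi(\cdot;0)+M+2\varepsilon$; replacing $M$ by $M+\varepsilon$ makes both boundary inequalities in \eqref{def-M} strict at $c=0$; and continuity of $\varphi(\cdot;c)$, $\varphi'(\pm1;c)$, $\varphi(\pm1;c)$ in $c$ then shows that for $c_0>0$ small, $\underline u(x,t)=\varphi(x;c_0)+M+\varepsilon+c_0t$ is a lower solution lying below $u_0$ at $t=0$. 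This is the step your argument is missing: the ordering at $t=0$ and the boundary inequalities must be arranged for \emph{one and the same} small-speed profile, and the only leverage available is the $\varepsilon$ of room built into \eqref{cond-u0} and \eqref{def-M} around the $c=0$ member of that specific family.
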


\begin{proof}
The inequality in \eqref{cond-u0} implies that $u_0(x)\geq \varphi(x;0)+M+2\varepsilon$
for some small $\varepsilon>0$.
Since $\varphi(x;c)$ depends continuously on $c$, we can choose a small $c_0>0$ such that
$$
u_0(x) \geq \varphi(x;0)+M+2\varepsilon  > \varphi(x;c_0) +M+ \varepsilon \mbox{ in } [-1,1].
$$
By \eqref{def-M}, we can also assume that
$$
\varphi'(1;c_0) < \varphi(1;c_0)+M+ \varepsilon,\quad  \varphi'(-1;c_0) > -[\varphi(-1;c_0)+M+ \varepsilon].
$$
Hence, $\varphi(x;c_0)+M+\varepsilon+c_0 t $ is a lower solution to Problem \eqref{B}. Then the comparison principle yields
$$
u(x,t)\geq \varphi(x;c_0)+M+ \varepsilon +c_0 t,\quad x\in [-1,1],\ t\in [0,T].
$$
This proves the left-hand side of \eqref{3.1}. To prove the right-hand side, one only needs to
verify that
$$
\overline{u}(x,t) := \Phi(x) + \bar{c}t +\|u_0\|_{L^\infty }
$$
is an  upper solution to Problem \eqref{B}.
\end{proof}

Secondly, we needs  the following gradient estimate.
\begin{lemma}\label{gradientestm}
Let $u(x,t)$ be the solution to Problem  \eqref{B} in $[0,T]$. Then there exist $C_3(T)$ such that
$$
\left|u_x(x,t)\right|\leq C_3(T), \qquad x\in[-1,1],\ t\in[0,T].$$
\end{lemma}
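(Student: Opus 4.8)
The plan is to control $u_x$ on the lateral boundary first, using the boundary condition together with the $L^\infty$-estimate of Lemma \ref{bound-est}, and then to propagate this bound into the interior by a maximum principle for $v := u_x$. The key observation is that the (apparently troublesome) boundary condition in Problem \eqref{B} in fact prescribes $u_x$ on $x=\pm1$ directly in terms of $u$, namely $u_x(1,t)=u(1,t)$ and $u_x(-1,t)=-u(-1,t)$. Hence, by \eqref{3.1}, for every $t\in[0,T]$,
\[
|u_x(\pm1,t)| = |u(\pm1,t)| \leq \max\{C_1,\ \bar c\,T + C_2\} =: M_0(T),
\]
so the gradient is already bounded on the two lateral sides of the cylinder $Q_T := [-1,1]\times[0,T]$.

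Next I would derive the equation for $v=u_x$. Writing the first equation of \eqref{B} as $u_t = F(u_x,u_{xx})$ with
\[
F(p,r) := a(p)\,\frac{r}{1+p^2} + b(p)\sqrt{1+p^2},
\]
and differentiating in $x$, one obtains
\[
v_t = F_r(v,v_x)\,v_{xx} + F_p(v,v_x)\,v_x, \qquad F_r(p,r)=\frac{a(p)}{1+p^2}>0 .
\]
The decisive structural feature is that this is a (quasilinear) parabolic equation for $v$ with strictly positive leading coefficient and, crucially, no zeroth-order term.

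I would then apply the weak maximum principle. Since the equation for $v$ contains no undifferentiated term, the standard device of comparing $v$ with $v-\varepsilon t$ (to exclude an interior extremum) and letting $\varepsilon\to0$ shows that $v$ attains its extreme values over $Q_T$ on the parabolic boundary $\{t=0\}\cup\{x=\pm1\}$. On $\{t=0\}$ we have $v=u_0'$, bounded by $\|u_0'\|_{L^\infty}$ because $u_0\in C^1$; on the lateral sides $|v|\leq M_0(T)$ by the first step. Combining these,
\[
|u_x(x,t)| = |v(x,t)| \leq \max\bigl\{\|u_0'\|_{L^\infty},\ M_0(T)\bigr\} =: C_3(T),
\]
which is exactly the asserted estimate (and correctly depends on $T$).

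The main difficulty is regularity-theoretic rather than conceptual. To write down the equation for $v$ and to run the maximum principle one needs $u$ to possess bounded third spatial derivatives at the points where the contradiction is produced; these are interior-in-$x$ points, where smoothness of the classical solution is supplied by interior parabolic regularity, so that $F_r(v,v_x)$ and $F_p(v,v_x)$ are finite there. Because any extremum located on $\{x=\pm1\}$ is handled simply by reading off the controlled boundary value $v=\pm u$, no bound on $u_{xx}$ up to the lateral boundary is actually required. I therefore expect the only point demanding care to be the justification that the weak maximum principle may be applied to $v$ on $Q_T$ given merely interior smoothness and continuity on $\overline{Q_T}$, which should be routine via the $v-\varepsilon t$ argument restricted to interior-in-$x$ points.
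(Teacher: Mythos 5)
Your proof is correct and follows essentially the same route as the paper: bound $u_x(\pm1,t)=\pm u(\pm1,t)$ via the $L^\infty$-estimate of Lemma \ref{bound-est}, then differentiate the equation and apply the maximum principle to $v=u_x$ (whose equation has no zeroth-order term) to push the boundary and initial bounds into the interior. Your write-up merely spells out the details the paper compresses into ``the maximum principle for $u_x$,'' and your constant $\max\{\|u_0'\|_{L^\infty},\,\max\{C_1,\bar c\,T+C_2\}\}$ matches the paper's $\max\{\|u_0'\|_{L^\infty},\,C_1+C_2+\bar c\,T\}$ up to an inessential enlargement.
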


\begin{proof}
From the above lemma, we see that
$$
\left|u_x(\pm 1,t)\right|=\left|u(\pm1,t)\right| \leq C_1 + C_2 + \bar{c} T , \qquad t\in[0,T].
$$
Then the maximum principle for $u_x$ yields
$$
\left|u_x(x,t)\right|\leq C_3 (T):=\max\{\|u'_0\|_{L^{\infty}}, C_1 + C_2 + \bar{c} T\}, \qquad x\in[-1,1],\ t\in[0,T],
$$
which concludes the proof.
\end{proof}

\begin{proof}[Proof of Theorem \ref{thm:global}] The standard parabolic theory together with
  the above a priori estimates (i.e., Lemmas \ref{bound-est}-\ref{gradientestm}) furnishes the time-global existence of the classical solution
$u(x,t)$ to Problem \eqref{B}.
The uniqueness of solutions can be proved in the standard way by the maximum principle.
By the left-hand side of \eqref{3.1},  we have
$u(x,t)\to +\infty$ as $t\to +\infty$.
\end{proof}



\section{Symmetric solutions to Problem \eqref{B}} \label{sec4}
This section is devoted to the convergence of symmetric solutions to Problem \eqref{B}. The usual method of studying the convergence  is to find  a uniform global gradient estimate. However, it does not work here due to the boundary conditions in \eqref{B}.
Hence, we turn to investigate the {\it uniform} (in $t$) interior gradient estimates and the convergence in the $L^\infty_{\text{loc}}((-1,1))$-topology by means of the maximum principle and the so-called zero number argument
(i.e., zero number diminishing properties, cf. \cite{Ang, Lou2} for instance).

Throughout this section, both $a$ and $b$ are even functions, i.e.,
\begin{equation}\label{symm cond}
A\left( \frac{(-p,1)}{\sqrt{1+p^2}}\right) = A\left( \frac{(p,1)}{\sqrt{1+p^2}}\right),\quad
B\left( \frac{(-p,1)}{\sqrt{1+p^2}}\right) = B\left( \frac{(p,1)}{\sqrt{1+p^2}}\right),\quad p\in \R.
\end{equation}
In this case,   the traveling waves obtained by Theorem \ref{thm:exist-GR} are even functions as well:
\begin{equation}\label{phi}
\Phi(x)=\Phi(-x),\quad \Phi(x;h)=\Phi(-x;h),\quad x\in [-1,1].
\end{equation}
From these traveling waves, we will
derive the interior gradient estimates and asymptotic behaviors of symmetric solutions of Problem \eqref{B}.


\subsection{Choice of initial data}
Choose an arbitrary constant pair $(p,M_1)$ with
\[
p\in [0,1],\ p\Phi'(\pm p)=\pm \left[\Phi(\pm p)+M_1\right],\ M_1>\varphi(x;0)+M,
\]
where $\Phi(x)$ and $M$ are defined   in \eqref{phi} and \eqref{def-M}, respectively. For such a pair $(p,M_1)$, we define
\begin{equation}\label{varphi-extension}
\rho(x) :=\rho_{p,M_1}(x) := \Phi( p x )+ M_1, \quad  x\in [-1, 1].
\end{equation}
Thus, $\rho(x)$
is a horizontal extension of $\Phi(x)$ (with vertical shift $M_1$) with
\begin{equation}\label{special-ini-data}
\left\{
\begin{array}{l}
   \rho\in C^2([-1,1]),\quad \rho'(x)<\Phi'(x) \mbox{ for } x\in (0,1], \quad \rho'(x)>\Phi'(x) \mbox{ for } x\in [-1,0),\\
   \\
   \rho'(\pm 1) = \pm \rho (\pm 1),\quad  \rho''(x) >0 ,\quad \rho(x) > \varphi(x;0)+M \ \mbox{ for } x\in [-1,1].
\end{array}
\right.
\end{equation}

In the sequel, we consider Problem \eqref{B} with the special initial data $u_0 =\rho$. In this case
Problem \eqref{B} has a unique time-global solution $u(x,t;\rho)$ which approaches infinity as $t\to +\infty$ (see Theorem \ref{thm:global}).

\subsection{Convexity of the solution}
\begin{lemma}\label{lem:convexity}
$u_{xx}(x,t;\rho)>0$ for all $x\in (-1,1),\ t>0$.
\end{lemma}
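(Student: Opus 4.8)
The plan is to prove the convexity by the maximum principle applied to $w:=u_{xx}$, the decisive observation being that the assertion concerns only the \emph{open} band $(-1,1)$, so that the (genuinely delicate) behaviour of $u_{xx}$ at the boundary $x=\pm1$ never has to be controlled. Writing Problem \eqref{B} as $u_t=F(u_x,u_{xx})$ with
\[
F(p,q):=\frac{a(p)}{1+p^2}\,q+b(p)\sqrt{1+p^2},\qquad F_q(p,q)=\frac{a(p)}{1+p^2}>0,\quad F_{qq}\equiv0,
\]
I would differentiate the equation twice in $x$. Using $u_{xxx}=w_x$ and $u_{xxxx}=w_{xx}$, and $F_{qq}\equiv 0$, this yields
\[
w_t=F_q\,w_{xx}+\bigl(F_p+2F_{pq}\,w\bigr)\,w_x+F_{pp}\,w^2,
\]
where the derivatives of $F$ are evaluated at $(u_x,u_{xx})$. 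By Lemma \ref{gradientestm} we have $|u_x|\le C_3(T)$ on $[-1,1]\times[0,T]$, so $a,b$ and their derivatives stay bounded there; regarding $F_{pp}\,w$ as a zeroth-order coefficient and $F_p+2F_{pq}w$ as a first-order coefficient, both locally bounded in the open band by the interior regularity of the classical solution, the displayed equation is a linear, uniformly parabolic equation for $w$ (uniformly parabolic since $F_q$ is bounded below by a positive constant once $|u_x|\le C_3(T)$).

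Next I would exploit the favourable initial datum: by \eqref{special-ini-data} we have $w(x,0)=\rho''(x)>0$ on $[-1,1]$. A first-contact argument then applies. Suppose, for contradiction, that $w$ vanishes somewhere in the interior, and set $t_0:=\inf\{t>0:\ w(x,t)\le0\text{ for some }x\in(-1,1)\}$. Then $w>0$ on $(-1,1)\times(0,t_0)$ and, by continuity, there is an interior point $x_0\in(-1,1)$ with $w(x_0,t_0)=0$ while $w(\cdot,t_0)\ge0$ on $(-1,1)$. If no such interior $x_0$ existed, i.e.\ the infimum were attained only in the limit $x\to\pm1$, then $w$ would remain strictly positive throughout the open band and there would be nothing left to prove.

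At $(x_0,t_0)$ the nonnegative function $w$ attains an interior zero after being strictly positive for $t<t_0$. Applying the strong maximum principle to $-w$ on $(-1,1)\times(0,t_0]$ — after the substitution $\widetilde w=e^{-\lambda t}w$ with $\lambda>\|F_{pp}\,w\|_{\infty}$ to make the zeroth-order coefficient negative — forces $w\equiv0$ on the whole slab $(-1,1)\times(0,t_0]$; letting $t\downarrow0$ and using continuity contradicts $w(\cdot,0)=\rho''>0$. Hence $w$ never vanishes in the interior, that is, $u_{xx}>0$ on $(-1,1)\times(0,\infty)$.

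The step I expect to be the main obstacle is exactly the one the open-interval formulation lets us sidestep. At $x=1$ the boundary condition $u_x(1,t)=u(1,t)$, differentiated in $t$, gives $u_{xt}(1,t)=u_t(1,t)$, which after using the equation and $F_q w=\tfrac{a}{1+p^2}w$ reduces, whenever $w(1,t_0)=0$, to
\[
u_{xxx}(1,t_0)=\frac{b(p)\,(1+p^2)^{3/2}}{a(p)}<0,\qquad p=u_x(1,t_0).
\]
This sign is perfectly consistent with $w$ touching zero at $x=1$ from the positive interior (indeed $w_x(1,t_0)\le0$ there), so no contradiction can be extracted at the boundary and a naive boundary maximum principle genuinely fails. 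Confining the analysis to interior zeros, where the strong maximum principle applies cleanly, is therefore the crux of the proof.
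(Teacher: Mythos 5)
There is a genuine gap, and it is located exactly where you claim the difficulty has been ``sidestepped.'' Your first-contact argument defines $t_0:=\inf\{t>0:\ w(x,t)\le0\ \text{for some}\ x\in(-1,1)\}$ and then asserts that either there is an interior point $x_0\in(-1,1)$ with $w(x_0,t_0)=0$, or else ``$w$ would remain strictly positive throughout the open band and there would be nothing left to prove.'' The second alternative is a non sequitur. If the infimum defining $t_0$ is realized by times $t_k\downarrow t_0$ and points $x_k\in(-1,1)$ with $w(x_k,t_k)\le0$ and $x_k\to\pm1$, then no interior touching point exists at time $t_0$ (one only gets $w(\pm1,t_0)\le0$ at the boundary, where your argument refuses to look), yet by the very definition of $t_0$ the function $w$ \emph{does} become nonpositive somewhere in the open band at times arbitrarily close to $t_0$ --- which is precisely what the lemma must exclude. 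Parabolic equations propagate boundary information into the interior instantaneously, so interior positivity of $w=u_{xx}$ cannot be deduced from the initial positivity $\rho''>0$ of \eqref{special-ini-data} alone; some control of $u_{xx}$ at $x=\pm1$ is unavoidable. Your closing paragraph, which correctly computes that a Hopf-type contradiction is unavailable when $w(1,t_0)=0$, actually confirms that the boundary is the crux --- but the right response is to control it, not to ignore it.

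This is what the paper's proof does. In its Step 1 it differentiates \eqref{B}$_1$ once, notes that $u_x$ satisfies a linear parabolic equation with no zeroth-order term, and uses the maximum principle (together with the structure of the data) to conclude that the positive maximum of $u_x(\cdot,t;\rho)$ is attained at $x=1$ and the negative minimum at $x=-1$; this forces $u_{xx}(\pm1,t;\rho)\ge0$, since otherwise $u_x$ would exceed its boundary value just inside the band. With this lateral boundary control in hand, Step 2 applies the weak maximum principle to the linear parabolic equation for $u_{xx}$ on the \emph{closed} slab $[-1,1]\times[0,\infty)$ (nonnegative lateral data, positive initial data $\rho''$) to get $u_{xx}\ge0$ everywhere, and only then invokes the strong maximum principle to upgrade to strict positivity in $(-1,1)$. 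Your derivation of the equation for $w$ and your use of the interior strong maximum principle are fine as far as they go, but the argument cannot close without an analogue of the paper's Step 1.
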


\begin{proof}For simplicity, we use \eqref{B}$_1$ to denote the equation in \eqref{B}. The proof is divided into two steps.

\smallskip

\textbf{Step 1.} It is easy to see $u_x$ satisfies a linear parabolic equation by
differentiating \eqref{B}$_1$ with respect to $x$.
Using the maximum principle to this problem, we conclude that the positive maximum of $u_x(\cdot,t;\rho)$ is
attained at $x=1$ while its negative minimum is attained at $x=-1$. We now claim $u_{xx}(1,t;\rho)\geq 0$. If not, one would have $u_x(x_1,t;\rho) > u_x(1,t;\rho)$
for some $x_1$ with $0\ll x_1 <1$, which contradicts  the positive maximum attained at boundary. Similarly, we have $u_{xx}(-1,t;\rho)\geq 0$.

\smallskip

\textbf{Step 2.} Differentiating \eqref{B}$_1$ twice we obtain
$$
(u_{xx})_t = a_1 (u_{xx})_{xx} + b_1 (u_{xx})_x + c_1 (u_{xx}),\quad -1 <x< 1,
$$
for some bounded $a_1, b_1, c_1$. Using the non-negativity of $u_{xx}$ on the boundaries $x=\pm 1$ we
conclude that $u_{xx}(x,t;\rho)\geq 0$ in $[-1,1]\times [0,\infty)$. Now the lemma  follows from the strong maximum principle.
\end{proof}

\subsection{Finer upper gradient estimate}


\begin{lemma}\label{lem:upper-interior-est}
The gradient of $u$ is bounded by $\Phi'(x)$ in the following sense:
\begin{equation}\label{upper-est}
\Phi'(x) < u_x(x,t;\rho)<0 \mbox{ for } x\in [-1,0),\quad 0<u_x(x,t;\rho)<\Phi'(x) \mbox{ for } x\in (0,1].
\end{equation}
\end{lemma}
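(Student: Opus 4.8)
The plan is to split the two assertions in \eqref{upper-est} into a \emph{sign part} (that $u_x<0$ on $[-1,0)$ and $u_x>0$ on $(0,1]$) and a \emph{magnitude part} (that $|u_x|<|\Phi'|$ in the open interval), handling the sign part by symmetry and convexity and the magnitude part by a zero-number argument comparing $u(\cdot,t;\rho)$ with vertical translates of the traveling wave $\Phi(x)+\bar c t$. First I would record that $u(\cdot,t;\rho)$ is even in $x$: since $a,b$ are even (see \eqref{symm cond}) and the datum $\rho$ is even, $\tilde u(x,t):=u(-x,t;\rho)$ solves the same Problem \eqref{B} with the same data, so by the uniqueness in Theorem \ref{thm:global} we get $\tilde u\equiv u$ and hence $u_x(0,t;\rho)=0$. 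Combined with the strict convexity $u_{xx}>0$ from Lemma \ref{lem:convexity}, the map $x\mapsto u_x(x,t;\rho)$ is strictly increasing and vanishes at $x=0$, which yields the sign part; it also shows $u_x$ and $\Phi'$ are both odd, so it suffices to prove $u_x(x,t;\rho)<\Phi'(x)$ on $(0,1)$ (the endpoints $x=\pm1$ being trivial, as $\Phi'(\pm1)=\pm\infty$ while $u_x$ is finite).

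For the magnitude part I would set $d_k(x,t):=u(x,t;\rho)-\Phi(x)-\bar c t-k$ for $k\in\R$. Because both $u$ and $\Phi(x)+\bar c t$ solve the same quasilinear equation \eqref{A} (recall that $(\bar c,\Phi)$ solves \eqref{TW-p}), subtracting the two equations and linearizing by the mean value theorem shows that $d_k$ satisfies a linear parabolic equation $\partial_t d_k=\alpha\,\partial_{xx}d_k+\beta\,\partial_x d_k$ with $\alpha>0$ on every compact subset of $(-1,1)$. Thus the interior zero number $Z(t)$ of $d_k(\cdot,t)$ is finite and non-increasing in $t$, and drops strictly whenever $d_k(\cdot,t)$ has a degenerate interior zero (cf. \cite{Ang,Lou2}). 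A key structural observation is that $\partial_x d_k=u_x-\Phi'\to-\infty$ as $x\to1^-$ and $\to+\infty$ as $x\to-1^+$, so the zeros of $d_k$ cannot escape to the boundary and the relevant tangencies are genuinely interior. At $t=0$ one has $\partial_x d_k(\cdot,0)=\rho'-\Phi'$, which by \eqref{special-ini-data} is positive on $(-1,0)$ and negative on $(0,1)$; hence $d_k(\cdot,0)$ is strictly unimodal with $Z(0)\le 2$, and therefore $Z(t)\le 2$ for all $t\ge0$ and every $k$.

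Finally I would argue by contradiction. Suppose $u_x<\Phi'$ on $(0,1)$ first fails at some $t_0>0$ and point $x_0\in(0,1)$, so $u_x(x_0,t_0;\rho)=\Phi'(x_0)$ while $u_x<\Phi'$ on $(0,1)$ for $t<t_0$. Then $x_0$ is an interior maximum of the nonpositive function $u_x(\cdot,t_0)-\Phi'$, so $\partial_x d_k(x_0,t_0)=\partial_{xx}d_k(x_0,t_0)=0$. Choosing $k=k_0$ with $d_{k_0}(x_0,t_0)=0$ produces an interior zero of multiplicity at least three at $x_0$, and by oddness of $\partial_x d_{k_0}$ another one at $-x_0$. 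These degenerate zeros force $Z$ to drop strictly across $t_0$; equivalently, just after $t_0$ the wiggle created in $\partial_x d_k$ on $(0,1)$ allows one to choose the shift $k$ so that $d_k(\cdot,t)$ has more than two zeros, contradicting $Z(t)\le 2$. Hence $u_x<\Phi'$ on $(0,1)$ for all $t$, which together with the sign part and oddness establishes \eqref{upper-est}.

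The step I expect to be the main obstacle is making the zero-number argument fully rigorous: verifying that $d_k$ indeed satisfies a bona fide linear parabolic equation to which the Angenent-type theorems apply despite the degeneration of the coefficients $\alpha,\beta$ near $x=\pm1$ (where $\Phi'(\pm1)=\pm\infty$), and converting the ``first failure'' tangency into a definite change of the interior zero count. The saving feature is precisely the boundary behavior $\partial_x d_k\to\mp\infty$ at $x=\pm1$, which confines all crossings to the interior; but it must be invoked carefully so that $Z(t)$ is governed only by interior zeros.
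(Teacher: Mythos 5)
Your strategy is the same as the paper's: compare $u(\cdot,t;\rho)$ with the one-parameter family of vertical translates $\Phi(x)+\bar c t+k$ and count zeros of the difference (your $d_k$ is exactly the paper's $\eta(x,t;r)$), using the symmetry/convexity of $u$ for the sign part and the infinite boundary slope of $\Phi$ to keep tangencies interior. The sign part is fine. The genuine gap is the single sentence ``$Z(0)\le 2$, and therefore $Z(t)\le 2$ for all $t\ge 0$ and every $k$.'' The zero-number diminishing property on a bounded interval holds only if no new zeros enter through the endpoints, and here $d_k(\pm1,t)=u(\pm1,t;\rho)-\Phi(\pm1)-\bar c t-k$ has no fixed sign and can cross zero as $t$ varies. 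The boundary behavior $\partial_x d_k\to-\infty$ as $x\to1^-$ does rule out degenerate boundary tangencies, but it works \emph{against} you at an influx moment: if $d_k(1,t)$ passes from positive to negative, then just afterwards $d_k(1,t)<0$ while $d_k$ is still positive slightly inside (because it decreases steeply toward $x=1$), so a brand-new interior zero is born at the boundary and $Z$ jumps up. Ruling this out is precisely where the paper spends its effort: it works on ``maximal effective intervals'' of each shift, shows (Step 2) that at the start of such an interval $\eta\ge0$ with equality exactly at $x=\pm1$, and then (Step 3) combines $\eta_x(1,t;r^*)=-\infty$ with a maximum-principle argument on the region between the maximum point $\xi(t)$ and a fixed $x_2$ to prove $\eta(1,t;r^*)<0$ throughout the interval; only then does the two-contact-point structure propagate. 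Without an argument of this type your global bound $Z(t)\le2$ is unproved and the final contradiction collapses.

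A secondary, fixable point: the statement that the order-three tangency at $\pm x_0$ ``forces $Z$ to drop strictly across $t_0$'' is not by itself in conflict with $Z\le2$ (a drop is consistent with the bound). The workable version is your parenthetical one — just after $t_0$ the derivative $u_x-\Phi'$ changes sign twice in $(0,1)$, so a suitable $k$ gives $d_k(\cdot,t)$ at least four zeros by evenness — but this again presupposes the monotonicity of $Z$ past boundary sign changes, i.e.\ the very step that is missing. You would also want a word on why the ``first failure'' cannot occur only in the limit $x\to0^+$ (where $u_x$ and $\Phi'$ both vanish); the paper's formulation via contact points of the graphs avoids this issue automatically.
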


\begin{proof}Before presenting the proof, we introduce some notions and notations first. Set
$$
\bar{u}(x,t;r):= \Phi(x)+\bar{c}t + r,
$$
where $\bar{c}$ is the constant in Theorem \ref{thm:exist-GR}/(i).  Clearly, $\bar{u}(x,t;r)$ is an upper solution to Problem \eqref{B} for any $r$.
On the other hand, we define
$$
 \mathcal{E}(t) := \left\{\left. \left( x, \bar{u}(x,t;r) \right) \right| x\in [-1,1],\ r\leq M_1\right\},
$$
Obviously,
$\mathcal{E}(t)$ is the lower
half of the band $\Omega$ with ceiling $\{( x, \bar{u}(x,t;M_1) ) \mid x\in [-1,1] \}$,   and $\mathcal{E}(t)$ moves upward with speed $\bar{c}$.
For each $r\leq M_1$, set
$$
\eta(x,t;r):= u(x,t;\rho)- \bar{u}(x,t;r) ,\quad x\in [-1,1],\ t>0.
$$
Then $\eta$ satisfies a linear parabolic equation. Since the ceiling function $\bar{u}(x,0;M_1) = \Phi(x)+M_1$ of $\mathcal{E}(0)$ lies above $u(x,0;\rho)=\rho(x)$, the comparison principle yields
$$
\eta (x,t;M_1) < 0 , \quad x\in [-1,1],\ t>0.
$$
Hence, $u(x,t;\rho)$ is always immersed in $\mathcal{E}(t)$, and it always
contacts $\bar{u}(x,t;r)$ for some $r\leq M_1$. Moreover,
\begin{itemize}
\item  We call $t_0$ an {\it effective moment} of   $\bar{u}(\cdot,t;r)$  if $\min \eta(\cdot,t_0;r)<0<\max \eta(\cdot,t_0;r)$.
Clearly, in a short period before or after such a moment, $\bar{u}$ always has contact points with $u$.
\item A time interval $(t_1, t_2)$ is called an {\it effective interval} if each $t$ in this interval is an
effective moment.
\item An effective interval $(t_1,t_2)$ is called  {\it maximal} if  for any $\varepsilon_1, \varepsilon_2 >0$,
both $(t_1 - \varepsilon_1, t_2)$ and $(t_1, t_2+\varepsilon_2)$ are not effective intervals.
\end{itemize}

We show (\ref{upper-est}) by studying the derivatives of $\eta(x,t;r)$ at its zeros. The proof is divided into two cases.

\smallskip

\noindent\textbf{Case 1.} We first consider the case when $t=0$.

At this moment, $\bar{u}(x,0;M_1) $ lies above $\rho(x)$ and is tangent to $\rho(x)$ at $x=0$. Thus there is a $M_2<M_1$ such that $u(x,0,\rho)$ contacts $\bar{u}(x,0;M_2)$ at $x=\pm 1$, and for any $r\in (M_2, M_1)$,
$u(x,0,\rho)$ contacts $\bar{u}(x,0;r)$ at exactly two
points $\pm Y(0;r)$. Clearly, there holds
\[
-1<- Y(0;r)<0< Y(0;r)<1,\ \eta_x(Y(0,r))<0,\  \eta_x(-Y(0,r))>0,
\]which implies
\begin{equation}\label{ini-baru-u}
\bar{u}_x (-Y (0;r),0;r) < u_x(- Y(0;r),0;\rho)<0 ,\quad 0<u_x( Y(0;r),0;\rho)< \bar{u}_x (Y(0;r),0;r).
\end{equation}
Hence, (\ref{upper-est}) at $t=0$ follows.

\smallskip

\noindent\textbf{Case 2.} Now we investigate the case when $t>0$.

Since $u(x,0;\rho)\leq\bar{u}(x,0;M_1)$, by the comparison principle we have $u(x,t;\rho)<\bar{u}(x,t;M_1)$
for all $t>0$. Hence, for any given $t^*>0$, $u(x,t^*;\rho)$
immerse in $\mathcal{E}(t^* )$.
So $u(x,t^* ;\rho)$ contacts a family of $\bar{u}(x,t^* ;r)'s$ (for $r<M_1)$.
Therefore, given a point $x^*\in(0,1)$, there is an unique $r^*$ such that $u(x^*,t^* ;\rho)=\bar{u}(x^*,t^* ;r^*)$. Let $(t_1, t_2)$ be a maximal effective interval  of $\bar{u}(\cdot,t;r^*)$.

\smallskip

\textbf{Claim:}
For each
$t\in (t_1, t_2)$, $\bar{u}(\cdot,t;r^*)$ contacts $u(\cdot, t;\rho)$ at exactly two points $\pm Y(t;r^*)$ with $-1<- Y(t;r^*)<0< Y(t;r^*)<1$ and
\begin{equation}\label{t-baru-u}
\bar{u}_x ( -Y (t;r^*),t;r^*) < u_x( -Y(t;r^*),t;\rho)<0 ,\quad 0<u_x( Y(t;r^*),t;\rho)< \bar{u}_x ( Y(t;r^*),t;r^*).
\end{equation}

\smallskip

Note that $t^* \in (t_1,t_2)$  and hence, $\pm Y(t^* ;r^*)=\pm x^*$. So, if the claim is true, then  (\ref{upper-est}) (at $x=x^*$ and $t=t^* $) follows from (\ref{t-baru-u}). Since both $t^* $ and $x^*$ are arbitrary,  we are done. Thus it suffices to show the claim. The proof is divided into three steps.

\medskip
\noindent
{\it Step 1. If $t_1 =0$, then Claim is true}.

\smallskip
First, we consider the case when $r\in (M_2,M_1)$.
According to \eqref{ini-baru-u}, the inequalities hold at $t=0$. By the zero number argument (cf.\,\cite{Ang, Lou2}),
these inequalities remain valid till one of the following happens: (i) the two zero points $\pm Y(t,r^*)$ meet at $x=0$ and then disappear for larger $t$ with $\bar{u} > u$;
(ii) $\bar{u}(x,t;r^*)\leq u(x,t;\rho)$, strictly in $(-1,1)$ and equals hold at $x=\pm 1$. However, in either of these
cases, the time moment will be the end moment of the effective interval $t_2$. Hence,  we are done.

\smallskip

Now we suppose $r=M_2$. Thus, the assumption yields
\begin{equation*}
\bar{u}(x,0;M_2) < u(x,0;\rho) \mbox{ in } (-1,1),\ \bar{u}(\pm 1,0;M_2) = u(\pm 1, 0;\rho),\ \pm [\bar{u}_x (\pm 1,0;M_2) - u_x (\pm 1, 0;\rho)]>0,
\end{equation*}
which implies that
 any $t\in(0, t_2)$ is an effective moment. Hence, the only
possible case is that $\bar{u}(x,t;M_2)$ and $u(x,t;\rho)$ have two non-degenerate contact points $\pm Y(t;r^*)$ near $x=\pm 1$ such that \eqref{t-baru-u} hold.
Then these inequalities remain valid till the end of the effective interval as specified in the above argument.

\medskip
\noindent
{\it  Step 2. If $t_1 >0$, then $\eta(x,t_1;r^*)\geq 0$ in $[-1,1]$, with equality if $x=\pm 1$.}

\smallskip

Note that $t_1$ is not an effective moment. There are only  four  cases:

\begin{itemize}
\item[(i)] If $\eta(x,t_1;r^*)>0$ in $[-1,1]$,  the continuity implies $\eta(x,t;r^*)>0$ in $[-1,1]$ for $0<t_1- t\ll 1$.
This contradicts the definition of $t_1$.

\smallskip

\item[(ii)] If $\max \eta(\cdot,t_1;r^*) =0$, then $u(x,t_1,\rho) \leq \bar{u}(x,t_1;r^*)$ in $[-1,1]$. It follows from the comparison principle that $u(x,t;\rho)<\bar{u}(x,t;r^*)$ for all $x\in [-1,1],\ t>t_1$. This also
contradicts the definition of $(t_1, t_2)$.

\smallskip

\item[(iii)] Suppose $\eta(\pm 1,t_1;r^*) > 0$. Thus, $\max \eta(\cdot , t_1;r^*)>0$, which together with the maximum principle yields $\eta(x,t;r^*)>0$ in
$[-1,1]\times [t_1, t)$, where $t_1 < t \ll t_1 +1$.  This again contradicts the definition of $(t_1, t_2)$.

\item[(iv)] From (i)-(ii), we have $\eta(x,t_1;r^*)\geq 0$ in $[-1,1]$. In particular, (iii) implies $\eta(\pm 1,t_1;r^*)=0$.
\end{itemize}

\medskip
\noindent
{\it Step 3. If $t_1 >0$, then Claim is true.}

\smallskip

We first prove that Claim is true for a short time after $t_1$.
Note that $\bar{u}_x(1,t;r^*)=\Phi'(1)=+\infty$, then
\begin{equation}\label{eta-slope0}
\eta_x(1,t;r^*) = u_x (1,t;\rho) - \bar{u}_x(1,t;r^*) = -\infty ,\quad t>0.
\end{equation}
Since $\eta(1,t_1;r^*)=0$ (see Step 2),  we have $\eta(x,t_1; r^*)>0$ in $[x_1,1)$ for some $x_1$ near $1$. For any $t$ with $t_1 < t\ll t_1 +1$, let $\xi(t)$ be the maximum point of $\eta(\cdot,t;r^*)$. Thus, $\eta(x,t;r^*)>0$ in $\{(x,t)\mid \xi(t)\leq x\leq x_1, 0< t-t_1 \ll 1\}$ due to
Step 2 and the maximum principle. Since $t\in(t_1,t_2)$, according to the convexity and symmetry of $u$ and $\bar{u}$, the two contact points $\pm Y(t;r^*)$ must belong to $[-1,-x_1)\cup(x_1,1]$ and hence, (\ref{t-baru-u}) follows. Therefore, we have proved Claim for a short time after $t_1$.

\smallskip

Now we prove Claim for the whole effective interval $(t_1,t_2)$.
If we can show that $\eta(\pm 1,t;r^*)<0$ for all $t\in (t_1, t_2)$, then Claim follows from the
zero number argument. By the symmetry of $u(x,t;\rho)$, it suffices to show $\eta(1,t;r^*)<0$ in $(t_1, t_2)$.
Suppose by contradiction that there is $t_3\in (t_1, t_2)$ such that
$\eta(1,t;r^*)<0$ for $t\in(t_1, t_3)$  but $\eta(1,t_3;r^*)=0$. The above argument implies that Claim  holds in $(t_1, t_3)$. Due to \eqref{eta-slope0} and
$\eta(1,t_3;r^*)=0$, there exists a point $x_2\in (\xi(t_3), 1)$ such that
$\eta(x,t_3;r^*)>0$ in $x\in (x_2, 1)$. The continuity furnishes a small $\epsilon>0$ such that
$\eta(x_2, t;r^*)>0$ for $t\in  (t_3 -\epsilon, t_3 +\epsilon)$. In the time period
$(t_3 -\epsilon, t_3)$, since $\eta(x_2,t;r^*) \cdot \eta(1,t;r^*)<0$, the unique zero point $Y(t;r^*)$ of $\eta(\cdot,t;r^*)$ must lie in
$(x_2, 1)$. Hence, $\eta(x,t;r^*)>0$ for $(x,t)\in [\xi(t),x_2]\times (t_3-\epsilon, t_3)$.
Now the maximum principle implies $\eta(x,t_3;r^*) >0$ for $x\in [\xi(t_3),x_2]$. Consequently,
$\eta(x,t_3;r^*)>0$ in $[\xi(t_3), 1)$. Thus, $\eta(x,t_3;r^*)$ has no zero points in $(0,1)$, which contradicts the definition of $(t_1, t_2)$. So, $t_3$ must be $t_2$ and therefore, we are done.
\end{proof}

\subsection{Finer lower gradient estimate}

We now give a finer lower gradient estimate of the solution $u(x,t;\rho)$.
Since $a_0 > -b_0$, there exists $h_* >0$ such that $a_0 h > -b_0 \sqrt{1+h^2}$ for all $h> h_*$.
According to Theorem \ref{thm:exist-GR}/(ii), Problem \eqref{TW-p} with $h > h_*$ has a solution pair
$(c(h), \Phi(x;h))$.

Fix any $h^0 > h_* $. By \eqref{3.1}, we can choose $t^0 >0$ large such that
$$
u(x,t^0;\rho)>\Phi(x; h^0)+h^0,\qquad x\in[-1,1].
$$
Thus, $\underline{u}^0 (x,t):=\Phi(x;h^0)+h^0+c(h^0)t$
is a lower solution to  Problem (\ref{B}). The comparison principle furnishes
\begin{equation}\label{4.1}
\underline{u}^0 (x,t)\leq u(x,t+t^0;\rho), \qquad x\in[-1,1],\ t\geq 0.
\end{equation}

On the other hand, choose  $h_0 \in (h_*, h^0)$ and  $H>h^0$ such that
\begin{equation}\label{Phi>u}
\Phi(x;h_0) + h > u(x,t^0;\rho),\quad x\in [-1,1],\ h\geq H.
\end{equation}
It is easy to check that, for any $h\geq H$,
$$
\underline{u}_0 (x,t;h) := \Phi(x;h_0) +c(h_0) t + h
$$
is also a lower solution to Problem (\ref{B}). Denote the union of the graphes of $\underline{u}_0(x,t;h)$'s by
$$
\mathcal{D}(t) :=\left\{\left(x,\underline{u}_0 (x,t;h)\right) \mid x\in[-1,1],\ h\geq  H \right\} =
\left\{(x,y)\mid x\in[-1,1],\ y\geq \underline{u}_0 (x,t;H) \right\}.
$$
Then $\mathcal{D}(t)$ is the upper half of the band $\Omega$ with bottom $\left\{\left(x, \underline{u}_0 (x,t;H)\right)
\mid x\in[-1,1]\right\}$. And $\mathcal{D}(t)$ moves upward with speed $c(h_0)$.

According to the construction above, $\underline{u}^0(x,0)$ lies below $u(x,t^0;\rho)$ while $u(x,t^0;\rho)$ lies below $\mathcal{D}(0)$. Since $c(h^0)>c(h_0)$,
$\underline{u}^0 (x,t)$ rushes into the domain $\mathcal{D}(t)$ for all large $t$. So dose $u(x,t+t^0;\rho)$ by (\ref{4.1}).
Assume this happens for $u(\cdot,t+t^0;\rho)$ when $t\geq T^0$.
Thus, for any $t\geq T^0$, $u(x,t+t^0;\rho)$ contacts a family of $\underline{u}_0 (x,t;h)$'s (for $h\geq H$).
Now set
$\widetilde{\eta}(x,t;h):=u(x,t+t^0;\rho)-\underline{u}_0 (x,t;h)$.
A similar argument as in the proof of Lemma \ref{lem:upper-interior-est} together with $\widetilde{\eta}(x,t;h)$ then furnishes the following result.
\begin{lemma}\label{lem:lower-interior-est}
The gradient of $u$ is bounded from below by $\Phi'(x;h_0)$ in the following sense:
\begin{equation}\label{lower-est}
 u_x(x,t;\rho) < \Phi'(x;h_0) <0 \mbox{ for } x\in [-1,0),\quad 0 < \Phi'(x;h_0) < u_x(x,t;\rho) \mbox{ for } x\in (0,1].
\end{equation}
\end{lemma}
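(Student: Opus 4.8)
The plan is to run the mirror image of the proof of Lemma~\ref{lem:upper-interior-est}, with the lower solutions $\underline{u}_0(\cdot,t;h)$ playing the role of the upper solutions $\bar u(\cdot,t;r)$ and the upper-half band $\mathcal{D}(t)$ playing the role of the lower-half band $\mathcal{E}(t)$. First I would record that, since $\underline{u}_0(x,t;h)=\Phi(x;h_0)+c(h_0)t+h$ solves the interior equation \eqref{B}$_1$ exactly (it is the traveling wave from Theorem~\ref{thm:exist-GR}/(ii)), the difference $\widetilde\eta(x,t;h)=u(x,t+t^0;\rho)-\underline{u}_0(x,t;h)$ satisfies a linear parabolic equation with bounded coefficients on $(-1,1)$, so the zero number argument applies to $\widetilde\eta(\cdot,t;h^*)$ for each fixed level $h^*$. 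Exactly as before I would introduce \emph{effective moments} (those $t$ with $\min\widetilde\eta(\cdot,t;h)<0<\max\widetilde\eta(\cdot,t;h)$) and \emph{maximal effective intervals}. Since $u(\cdot,t+t^0;\rho)$ rushes into $\mathcal{D}(t)$ for $t\ge T^0$ by \eqref{4.1}--\eqref{Phi>u}, for each fixed $x^*\in(0,1)$ and each $t^*\ge T^0$ there is a unique $h^*\ge H$ with $u(x^*,t^*+t^0;\rho)=\underline{u}_0(x^*,t^*;h^*)$, and I take the maximal effective interval $(t_1,t_2)\ni t^*$ of $\underline{u}_0(\cdot,t;h^*)$.

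The core is the mirror of the Claim in Lemma~\ref{lem:upper-interior-est}: for every $t\in(t_1,t_2)$ the curve $\underline{u}_0(\cdot,t;h^*)$ meets $u(\cdot,t+t^0;\rho)$ at exactly two symmetric points $\pm Y$ (writing $Y:=Y(t;h^*)$) with $-1<-Y<0<Y<1$, and there
\begin{equation*}
\underline{u}_{0,x}(-Y,t;h^*)>u_x(-Y,t+t^0;\rho),\qquad u_x(Y,t+t^0;\rho)>\underline{u}_{0,x}(Y,t;h^*),
\end{equation*}
which is precisely \eqref{lower-est} at $x=\pm Y$; since $x^*=Y(t^*;h^*)$ and $x^*,t^*$ are arbitrary, the lemma follows. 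To prove the Claim I would split according to whether $t_1$ is the entry time $T^0$ or a later endpoint $t_1>0$, analyze the sign of $\widetilde\eta(\cdot,t_1;h^*)$ (the reverse of Step~2 there: it must be $\le 0$ on $[-1,1]$ with equality forced at $x=\pm1$), and then propagate the two-zero, sign-crossing pattern forward by the zero number argument. Here $u$ lies below the level curve near the center and above it near the walls; the evenness \eqref{phi} of $\Phi(\cdot;h_0)$ together with the evenness and convexity of $u$ (Lemma~\ref{lem:convexity}) pins the two zeros at symmetric locations $\pm Y$ and yields the non-degenerate slope comparison displayed above.

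The step I expect to be the main obstacle, and the only place where the argument genuinely departs from Lemma~\ref{lem:upper-interior-est}, is the boundary behaviour. There the barrier had $\bar u_x(\pm1,t;r)=\Phi'(\pm1)=\pm\infty$, which forced $\eta_x(\pm1,t;r)=\mp\infty$ and sent the contact points into the interior near $x=\pm1$ automatically (see \eqref{eta-slope0}). Here the barrier slope is \emph{finite}, $\underline{u}_{0,x}(\pm1,t;h^*)=\Phi'(\pm1;h_0)=\pm h_0$, so the mechanism must come from the solution side: by the Robin condition $u_x(\pm1,\cdot)=\pm u(\pm1,\cdot)$ and the growth $u(\pm1,t)\to+\infty$ from \eqref{3.1}, there is a time after which $\widetilde\eta_x(1,t;h^*)=u(1,t+t^0;\rho)-h_0>0$ and $\widetilde\eta_x(-1,t;h^*)<0$, while the speed gap $c(h^0)>c(h_0)$ in \eqref{4.1} makes the boundary \emph{value} $\widetilde\eta(\pm1,t;h^*)$ eventually positive. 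The delicate part is to show $\widetilde\eta(\pm1,t;h^*)>0$ holds throughout $(t_1,t_2)$, so that the two interior zeros persist; I would argue by contradiction as in Step~3 there, assuming a first $t_3\in(t_1,t_2)$ with $\widetilde\eta(1,t_3;h^*)=0$, using $\widetilde\eta_x(1,t_3;h^*)>0$ to locate a boundary layer where $\widetilde\eta<0$, and deriving from the maximum principle that $\widetilde\eta(\cdot,t_3;h^*)$ would then be negative throughout $[\xi(t_3),1)$ (with $\xi(t)=0$ the minimizer, by symmetry), hence have no zero in $(0,1)$ at $t_3$ --- contradicting $t_3<t_2$. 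Thus $t_3=t_2$ and the Claim, and therefore \eqref{lower-est}, holds on the whole effective interval.
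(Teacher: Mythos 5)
Your proposal is correct and follows essentially the same route as the paper, which itself proves Lemma \ref{lem:lower-interior-est} only by the remark that ``a similar argument as in the proof of Lemma \ref{lem:upper-interior-est} together with $\widetilde{\eta}(x,t;h)$'' gives the result. You have in fact supplied the one detail the paper leaves implicit: since the barrier slope $\Phi'(\pm1;h_0)=\pm h_0$ is finite (unlike $\Phi'(\pm1)=\pm\infty$ in \eqref{eta-slope0}), the sign of $\widetilde\eta_x(\pm1,t;h^*)$ must instead come from the Robin condition $u_x(\pm1,\cdot)=\pm u(\pm1,\cdot)\to\pm\infty$, and your treatment of this point is the right one.
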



\subsection{Convergence of the solution}

Let $u(x,t; \rho)$ be the symmetric solution with the $u(x,0;\rho)=\rho$, where $\rho$ is defined as in \eqref{varphi-extension}.
Let $\{t_n\}$ be a time sequence with $t_n\rightarrow \infty$ (as $n\rightarrow\infty)$. Set
$$
u_n(x,t):=u(x,t+t_n; \rho)-u(0,t_n;\rho),\qquad x\in[-1,1],\ -t_n<t<\infty.
$$
For any  $\varepsilon>0$ and any $h_0>0$, Lemma \ref{lem:upper-interior-est} together with Lemma \ref{lem:lower-interior-est} yields
\begin{equation}\label{5.1}
\Phi_x (x;h_0)<u_{nx}(x,t)<\Phi'(x),\qquad x\in (0,1-\varepsilon],\ n\gg 1.
\end{equation}
Given any $T>0$,  Lemma \ref{bound-est} together with \eqref{5.1} furnishes
$$
\left\|u_n(x,t) \right\|_{C^{1,0}\left([\varepsilon-1,1-\varepsilon]\times [-T,T]\right)}\leq C_1(\varepsilon,T).
$$
By the $L^p$-estimates, the Sobolev embedding theorem and the Schauder estimate we have
$$
\left\|u_n(x,t) \right\|_{C^{2+\alpha,1+\frac{\alpha}{2}}\left([\varepsilon-1,1-\varepsilon]\times [-T,T]\right)}\leq C_2(\varepsilon,T),\quad \forall \alpha\in(0,1).
$$
Therefore, for any $\beta\in (0,\alpha)$, there exists  a subsequence $\{u_{n_i}\}$ of $\{u_n\}$ and a function
$\mathcal{U}_{T,\varepsilon}\in C^{2+\alpha,1+\frac{\alpha}{2}} $ $\left([\varepsilon-1,1-\varepsilon]\times [-T,T]\right)$
such that
$$
\left\|u_{n_i}-\mathcal{U}_{T,\varepsilon}\right\|_{C^{2+\beta,1+\frac{\beta}{2}}
\left([\varepsilon-1,1-\varepsilon]\times [-T,T]\right)}\rightarrow 0\quad  (i\rightarrow \infty).
$$
Cantor's  diagonal argument then furnishes a function $\mathcal{U}\in C^{2+\alpha,1+\frac{\alpha}{2}}_{\text{loc}}
\left((-1,1)\times \R\right)$ and a subsequence of $\{u_n\}$ (denoted it again by $\{u_{n_i}\}$) such that
 $$
u_{n_i}\rightarrow \mathcal{U}\ \ (i\to \infty), \quad\text{in the }  C^{2+\beta,1+\frac{\beta}{2}}_{\text{loc}}\left((-1,1)\times \R\right)\text{-topology}.
 $$
In particular, $\mathcal{U}(x,t)$ is an entire solution to  (\ref{B})$_1$ (i.e., the equation in (\ref{B})) with $\mathcal{U}(0,0)=0$. Now (\ref{5.1}) implies
$$
\Phi_x(x;h_0)\leq \mathcal{U}_x(x,t)\leq \Phi'(x),\qquad  x\in [0,1),\ t\in \R.
$$
Since
$\lim_{h_0\rightarrow+\infty}\Phi_x(x;h_0)= \Phi' (x)$ (see Theorem \ref{thm:exist-GR}),
we conclude that
$$
\mathcal{U}_x(x,t) =  \Phi'(x),\qquad x\in (-1,1),
$$
which implies
$$
\mathcal{U}(x,t)= \Phi (x)+ C(t),\qquad x\in (-1,1),\ t\in \R.
$$
Since $\mathcal{U}$ satisfies (\ref{B})$_1$, we have
$C'(t)=\bar{c}$ and hence,
$$
\mathcal{U}(x,t)=\Phi (x)+ \bar{c} t,\qquad x\in(-1,1),\ t\in \R.
$$
Here, $\bar{c}$ is the constant in Theorem \ref{thm:exist-GR}/(i).

From above, $\{u_{n_i}\}$ converges to the special cup-like traveling wave $\Phi (x)+\bar{c} t$.
Since this traveling wave is unique and the time sequence $\{t_n\}$ is arbitrarily given, we actually prove the following result.

\begin{theorem}\label{thm:converge-symmetric}
For any $\alpha\in (0,1)$,
\begin{equation}\label{5.2}
u(x,t+s;\rho)-u(0,s;\rho)\rightarrow \Phi (x)+ \bar{c} t, \qquad \text{as} \quad s\rightarrow \infty,
\end{equation}
in the $C^{2+\alpha,1+\frac{\alpha}{2}}_{\text{loc}}\left((-1,1)\times \R \right)$-topology.
\end{theorem}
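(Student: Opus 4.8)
The plan is to prove the limit along an arbitrary sequence $s_n\to\infty$ and then use uniqueness of the limiting profile to upgrade this to convergence of the full family as $s\to\infty$. First I would fix $s_n\to\infty$, set $v_n(x,t):=u(x,t+s_n;\rho)-u(0,s_n;\rho)$, and note that each $v_n$ solves the equation \eqref{B}$_1$ with the normalization $v_n(0,0)=0$. For every fixed $h_0>h_*$ and every $\varepsilon>0$, the interior gradient estimates of Lemma \ref{lem:upper-interior-est} and Lemma \ref{lem:lower-interior-est} give the two-sided bound $\Phi'(x;h_0)\le v_{nx}(x,t)\le\Phi'(x)$ on $(0,1-\varepsilon]$ (and its mirror image on $[\varepsilon-1,0)$), uniformly in both $n$ and $t$. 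Combined with the $L^\infty$ control of Lemma \ref{bound-est}, this produces a uniform $C^{1,0}$ bound on each parabolic cylinder $[\varepsilon-1,1-\varepsilon]\times[-T,T]$.

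Next I would bootstrap regularity. On every such interior cylinder the principal coefficient $a(v_{nx})/(1+v_{nx}^2)$ is bounded away from zero and the equation is uniformly parabolic with controlled coefficients, so $L^p$ estimates, Sobolev embedding and interior Schauder estimates promote the $C^{1,0}$ bound to a uniform $C^{2+\alpha,1+\frac{\alpha}{2}}$ bound. A Cantor diagonal argument over an exhaustion of $(-1,1)\times\R$ by such cylinders then extracts a subsequence $v_{n_i}$ converging in $C^{2+\beta,1+\frac{\beta}{2}}_{\text{loc}}$ for every $\beta<\alpha$ to an entire solution $\mathcal{U}$ of \eqref{B}$_1$ satisfying $\mathcal{U}(0,0)=0$.

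The decisive step is to identify $\mathcal{U}$ with the cup-like wave, and I expect this pinching to be the only genuine difficulty. Letting $i\to\infty$ in the gradient bounds gives $\Phi'(x;h_0)\le\mathcal{U}_x(x,t)\le\Phi'(x)$ for every admissible $h_0$; since the lower barrier still depends on $h_0$, I would then send $h_0\to\infty$ and invoke Theorem \ref{thm:exist-GR}, which guarantees $\Phi'(x;h_0)\to\Phi'(x)$ (equivalently $c(h_0)\to\bar{c}$ together with convergence of the profiles). This forces $\mathcal{U}_x(x,t)\equiv\Phi'(x)$, so integration in $x$ yields $\mathcal{U}(x,t)=\Phi(x)+C(t)$. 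Substituting into \eqref{B}$_1$ and using that $(\bar{c},\Phi)$ solves the traveling-wave equation \eqref{TW-p}$_1$ gives $C'(t)\equiv\bar{c}$, while the normalizations $\mathcal{U}(0,0)=0$ and $\Phi(0)=0$ give $C(0)=0$; hence $\mathcal{U}(x,t)=\Phi(x)+\bar{c}t$.

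Finally, because this limit is independent of the extracted subsequence and of the choice of $s_n$, the standard argument that a uniquely determined subsequential limit forces convergence of the whole family shows that $u(\cdot,\cdot+s;\rho)-u(0,s;\rho)\to\Phi(x)+\bar{c}t$ as $s\to\infty$ in the $C^{2+\alpha,1+\frac{\alpha}{2}}_{\text{loc}}((-1,1)\times\R)$-topology, as claimed.
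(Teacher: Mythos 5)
Your proposal is correct and follows essentially the same route as the paper: uniform two-sided interior gradient bounds from Lemmas \ref{lem:upper-interior-est} and \ref{lem:lower-interior-est}, parabolic bootstrap and a diagonal argument to extract an entire limit solution $\mathcal{U}$, squeezing $\mathcal{U}_x$ between $\Phi'(x;h_0)$ and $\Phi'(x)$ and letting $h_0\to\infty$ to identify $\mathcal{U}=\Phi(x)+\bar{c}t$, and finally uniqueness of the subsequential limit to upgrade to full convergence. Your explicit determination of the additive constant via $\mathcal{U}(0,0)=0=\Phi(0)$ is a small clarification the paper leaves implicit, but the argument is otherwise identical.
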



\section{General solutions}\label{sec5}
In this section we consider Problem \eqref{B} with general initial data and complete the proof of Theorem \ref{thm:main}. In the following, we assume $a$ and $b$ are even functions as in \eqref{symm cond}.

\subsection{Interior estimates}

Let $\rho(x)$ be defined as in \eqref{varphi-extension}. Hereinafter, we additionally require that $(p,M_1)$ satisfies
\begin{equation}\label{new5.1}
\rho(x) < u_0(x) < u(x,T;\rho),\qquad x\in[-1,1],
\end{equation}
for some positive $T$. Then  the comparison principle yields
\begin{equation}\label{4.9}
u(x,t;\rho ) < u(x,t; u_0)< u(x,t+T;\rho ) ,\qquad x\in [-1,1],\ t>0.
\end{equation}

In the sequel,  we will present a uniform interior gradient estimate. In order to do this, we need the following lemma.

\begin{lemma}\label{lem4.3}
For any small $\varepsilon\in(0, \frac12)$ and any $t>0$, there hold
\begin{equation}\label{4.10}
\min_{1-2\varepsilon\leq x\leq 1-\varepsilon}\left|u_x(x,t;u_0)\right|< M_2 :=\varepsilon^{-1} \Big[\Phi(1-\varepsilon)+{\bar{c}} T \Big],\qquad
\min_{\varepsilon-1 \leq x\leq 2\varepsilon-1}\left|u_x(x,t;u_0)\right|<M_2,
\end{equation}
where  $\bar{c}$ (resp., $T$) is defined as in Theorem \ref{thm:exist-GR} (resp., \eqref{new5.1}).
\end{lemma}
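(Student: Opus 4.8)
The plan is to reduce the claim to a bound on the spatial oscillation of $u(\cdot,t;u_0)$ across the short interval and then to control that oscillation by squeezing $u(\cdot,t;u_0)$ between two time-shifted copies of the symmetric solution $u(\cdot,\cdot;\rho)$. First I would apply the mean value theorem on $[1-2\varepsilon,1-\varepsilon]$: there is $\xi\in(1-2\varepsilon,1-\varepsilon)$ with
\[
u_x(\xi,t;u_0)=\frac{u(1-\varepsilon,t;u_0)-u(1-2\varepsilon,t;u_0)}{\varepsilon},
\]
so that $\min_{1-2\varepsilon\leq x\leq1-\varepsilon}|u_x(x,t;u_0)|\leq|u_x(\xi,t;u_0)|$, and it suffices to prove $|u(1-\varepsilon,t;u_0)-u(1-2\varepsilon,t;u_0)|<\Phi(1-\varepsilon)+\bar{c}T$.

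The key device is the \emph{gap function} $g(t):=\max_{x\in[-1,1]}[u(x,t;\rho)-\Phi(x)-\bar{c}t]$, for which I would establish two facts. First, $g$ is non-increasing: since $\Phi(x)+\bar{c}t+g(t_0)$ is exactly the upper solution $\bar{u}(x,t;g(t_0))$ appearing in the proof of Lemma \ref{lem:upper-interior-est}, and it lies above $u(\cdot,t_0;\rho)$ at time $t_0$ by the very definition of $g(t_0)$, the comparison principle forces $u(x,t;\rho)\leq\Phi(x)+\bar{c}t+g(t_0)$ for all $t\geq t_0$, i.e.\ $g(t)\leq g(t_0)$. Second, the maximum defining $g(t)$ is attained at $x=0$: by Lemma \ref{lem:upper-interior-est} one has $u_x(x,t;\rho)<\Phi'(x)$ on $(0,1]$, so the even function $x\mapsto u(x,t;\rho)-\Phi(x)-\bar{c}t$ is strictly decreasing on $(0,1)$; hence $g(t)=u(0,t;\rho)-\bar{c}t$, where I use the normalization $\Phi(0)=0$ that holds automatically in the symmetric setting. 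In particular $u(x,t;\rho)\leq\Phi(x)+\bar{c}t+g(t)$ for every $x$, with equality at $x=0$.

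With these in hand I would close the estimate. Using the squeeze \eqref{4.9}, then the upper barrier at time $t+T$ together with $g(t+T)\leq g(t)$, then the convexity of $u(\cdot,t;\rho)$ from Lemma \ref{lem:convexity} (which places its spatial minimum at $x=0$) and the identity $u(0,t;\rho)=\bar{c}t+g(t)$:
\begin{align*}
u(1-\varepsilon,t;u_0)-u(1-2\varepsilon,t;u_0)
&< u(1-\varepsilon,t+T;\rho)-u(1-2\varepsilon,t;\rho)\\
&\leq \big[\Phi(1-\varepsilon)+\bar{c}(t+T)+g(t)\big]-u(0,t;\rho)\\
&=\Phi(1-\varepsilon)+\bar{c}T .
\end{align*}
Interchanging the roles of the two endpoints (lower-bounding $u(1-\varepsilon,t;u_0)$ by $u(0,t;\rho)$ and upper-bounding $u(1-2\varepsilon,t;u_0)$ via the barrier at $t+T$) and using $\Phi(1-2\varepsilon)\leq\Phi(1-\varepsilon)$ yields the matching lower bound $-(\Phi(1-\varepsilon)+\bar{c}T)$. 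This gives the first inequality in \eqref{4.10}; the second follows by the symmetric argument on $[\varepsilon-1,2\varepsilon-1]$.

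The main obstacle is precisely the two different time levels produced by the squeeze \eqref{4.9}: a direct comparison confronts $u(\cdot,t+T;\rho)$ with $u(\cdot,t;\rho)$, and the unbounded-in-$t$ term $\bar{c}t$ must cancel exactly, so crude $L^\infty$ bounds (which only control growth at rate $\bar{c}$ from above but at a strictly smaller rate from below) are too weak. Monotonicity of $g$ is what converts the unwanted time shift into the clean increment $\bar{c}T$, while the centering property, which rests on the finer gradient estimate of Lemma \ref{lem:upper-interior-est}, is what makes the cancellation exact rather than merely up to an uncontrolled additive constant.
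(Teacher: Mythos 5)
Your proof is correct and follows essentially the same route as the paper: reduce the claim to an oscillation bound of $u(\cdot,t;u_0)$ over the $\varepsilon$-interval (your mean value theorem step is the contrapositive of the paper's integrate-and-contradict step), then combine the squeeze \eqref{4.9}, convexity/symmetry placing the minimum of $u(\cdot,t;\rho)$ at $x=0$, and comparison with the upper solution $\Phi(x)+\bar{c}t+\mathrm{const}$ to absorb the time shift $T$. Your ``gap function'' $g(t)$ and its monotonicity are exactly the paper's step ``$u(x,t_0;\rho)<\Phi(x)+u(0,t_0;\rho)$ implies $u(x,t_0+T;\rho)<\Phi(x)+\bar{c}T+u(0,t_0;\rho)$,'' just packaged more explicitly (and you are slightly more careful than the paper about the sign of the oscillation).
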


\begin{proof}
We only show the first inequality since the second one can be  proved similarly.
Assume by contradiction that, for some $t=t_0>0$,
$$
\left|u_x(x,t_0;u_0)\right| \geq M_2, \qquad x\in[1-2\varepsilon, 1-\varepsilon].
$$
Integrating this inequality over $[1-2\varepsilon, 1-\varepsilon]$, we obtain
\begin{equation}\label{4.11}
\Phi\left(1-\varepsilon\right)+\bar{c} T\leq u\left(1-\varepsilon, t_0; u_0\right)-u\left(1-2\varepsilon, t_0; u_0\right).
\end{equation}

On the other hand, (\ref{4.9}) together with the convexity of $u(\cdot, t; \rho )$ yields
\begin{equation}\label{4.12}
 \begin{array}{lll}
 u \left(1-\varepsilon, t_0; u_0\right)-u\left(1-2\varepsilon, t_0; u_0\right)
 & < & u\left(1-\varepsilon, t_0+T; \rho \right)-u\left(1-2\varepsilon, t_0; \rho \right) \\
 & \leq & u\left(1-\varepsilon, t_0+T; \rho \right)-u\left(0, t_0; \rho \right).
 \end{array}
\end{equation}
Since $u(x,t_0;\rho )<\Phi(x)+u(0,t_0;\rho )$, the comparison principle  furnishes
$$
u(x,t_0 +T;\rho )<\Phi(x)+\bar{c} T+u(0,t_0;\rho ),
$$
which implies
$$
u\left(1-\varepsilon, t_0+T; \rho \right)-u\left(0, t_0; \rho \right)< \Phi(1-\varepsilon)+\bar{c} T.
$$
However, this contradicts (\ref{4.11}) and (\ref{4.12}). Hence, the lemma follows.
\end{proof}

With the help of the above lemma, we obtain the following interior gradient estimate.

\begin{lemma}\label{lem:interior-g-est}
For any small $\varepsilon>0$, there exists $T_\varepsilon >0$ such that
\begin{equation}\label{interior-g-est}
 |u_x (x,t; u_0)| \leq M_3, \quad   -1+2\varepsilon <x <1-2\varepsilon,\ t>T_\varepsilon,
 \end{equation}
where $M_3 := \max\{M_2, \|u_x(\cdot, T_\varepsilon; u_0)\|_{L^\infty}\}$ and $M_2$ is defined as in \eqref{4.10}.
\end{lemma}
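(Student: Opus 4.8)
The plan is to control $v:=u_x(\cdot,\cdot;u_0)$ by the maximum principle, using Lemma~\ref{lem4.3} to manufacture lateral barriers that move with $t$. First I would differentiate the equation in \eqref{B} once in $x$: since its right-hand side depends only on $(u_x,u_{xx})$, the function $v=u_x$ solves a linear, uniformly parabolic equation $v_t=a_1 v_{xx}+b_1 v_x$ with $a_1=a(v)/(1+v^2)>0$ and bounded $b_1$, and with \emph{no zeroth-order term}. Hence every constant is a solution, the strong maximum principle applies to $v$, $v-M_3$ and $v+M_3$ alike, and the extrema of $v$ over any space--time cylinder are attained on its parabolic boundary.

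I would prove $u_x\le M_3$ and $u_x\ge -M_3$ on $[-1+2\varepsilon,1-2\varepsilon]$ separately; the two are mirror images, so I describe only the upper bound. Fix a time $T_\varepsilon>0$. By Lemma~\ref{gradientestm} the slice $v(\cdot,T_\varepsilon)$ is bounded, so $\|u_x(\cdot,T_\varepsilon;u_0)\|_{L^\infty}<\infty$ and $M_3<\infty$. The heart of the argument is to trap the target interval between two moving barriers supplied by Lemma~\ref{lem4.3}: for each $t$ that lemma produces a point $P(t)\in[1-2\varepsilon,1-\varepsilon]$ and a point $Q(t)\in[-1+\varepsilon,-1+2\varepsilon]$ at which $|u_x|<M_2$, hence at which $v<M_2\le M_3$. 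Because $Q(t)\le -1+2\varepsilon$ and $P(t)\ge 1-2\varepsilon$, we have $[-1+2\varepsilon,1-2\varepsilon]\subseteq[Q(t),P(t)]$ for every $t$. On the bottom $t=T_\varepsilon$ one has $v\le M_3$, and along the two barrier curves $x=Q(t)$, $x=P(t)$ one has $v<M_2\le M_3$; the maximum principle applied to the region swept between these curves then yields $v\le M_3$ there, in particular on $[-1+2\varepsilon,1-2\varepsilon]$ for all $t>T_\varepsilon$. Replacing $v$ by $-v$ and using that the same dips give $v>-M_2\ge -M_3$ produces the matching lower bound, and $M_3=\max\{M_2,\|u_x(\cdot,T_\varepsilon;u_0)\|_{L^\infty}\}$ emerges exactly as the larger of the two barrier levels.

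The step I expect to be the main obstacle is that the barrier positions $P(t),Q(t)$ are only known pointwise in $t$ and need be neither continuous nor monotone, so \emph{the region swept between them} is not literally a smooth parabolic domain on which the maximum principle can be quoted. I would make the argument rigorous by a first-touching-time/super-level-set reformulation: assuming $v(x_1,t_1)>M_3$ for some $x_1\in[-1+2\varepsilon,1-2\varepsilon]$ and $t_1>T_\varepsilon$, set $V:=\max_{[-1+2\varepsilon,1-2\varepsilon]\times[T_\varepsilon,t_1]}v>M_3$, attained at $(x_0,t_0)$, and study the connected component $\Sigma$ of the open set $\{(x,t):-1+\varepsilon<x<1-\varepsilon,\ T_\varepsilon<t\le t_1,\ v>M_2\}$ containing $(x_0,t_0)$. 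On $\Sigma$ one has $v>M_2$, so its parabolic boundary meets only $\{v=M_2\}$ and $\{t=T_\varepsilon\}$, where $v\le M_3$, contradicting $V>M_3$ by the strong maximum principle---\emph{provided} $\Sigma$ never reaches the vertical lines $x=\pm(1-\varepsilon)$, where $v$ is uncontrolled. Establishing this confinement is the delicate point, and it is precisely here that having a sub-$M_2$ dip in \emph{both} strips at \emph{every} time is essential: these dips, together with the two-sided squeezing \eqref{4.9}, must be shown to disconnect the middle super-level set from both boundary strips for all relevant times. This is the one place where I would argue most carefully, and everything else reduces to routine applications of the maximum principle.
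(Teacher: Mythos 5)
You have correctly identified the crux, but you have not closed it, and as written the proof does not go through. The gap is exactly the ``confinement'' step you flag at the end: Lemma \ref{lem4.3} only gives, for each fixed $t$, \emph{some} point $P(t)\in[1-2\varepsilon,1-\varepsilon]$ (and $Q(t)$ on the other side) where $|u_x|<M_2$, with no continuity or monotonicity in $t$. A closed set (here $\{|u_x|\le M_2\}$, or the complement of your super-level set) that meets every horizontal time-slice of the rectangle $[1-2\varepsilon,1-\varepsilon]\times[T_\varepsilon,t_1]$ need \emph{not} separate the left edge from the right edge: two overlapping ``staircase'' pieces that meet every slice but do not connect to each other leave a corridor through which a space-time path can pass. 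So the connected component $\Sigma$ of $\{v>M_2\}$ containing your interior maximum point may perfectly well reach the lines $x=\pm(1-\varepsilon)$, where $v$ is uncontrolled, and the parabolic-boundary argument collapses. Neither the pointwise dips of Lemma \ref{lem4.3} nor the squeezing \eqref{4.9} rules this out; some additional structural input is required.

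The paper supplies that input with the zero number argument (Angenent's theory, \cite{Ang,Lou2}), which is the one idea missing from your proposal. One first takes $T'$ so large that $u(\pm1,t;u_0)>M_2$, hence $\zeta:=u_x-M_2$ satisfies $\zeta(1,t)>0>\zeta(-1,t)$ and a linear parabolic equation with no zeroth-order term; the zero number of $\zeta(\cdot,t)$ is then finite and nonincreasing, so after some time $T_\varepsilon$ all zeros are non-degenerate and the largest zero $\rho_+(t)$ is a \emph{continuous} curve, which by \eqref{4.10} (a dip with $\zeta<0$ in $[1-2\varepsilon,1-\varepsilon]$ against $\zeta(1,t)>0$) satisfies $\rho_+(t)>1-2\varepsilon$. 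Doing the same for $u_x+M_2$ gives a continuous $\rho_-(t)<-1+2\varepsilon$ with $u_x(\rho_-(t),t)=-M_2$. The region $D(T_\varepsilon)=\{\rho_-(t)<x<\rho_+(t),\ t>T_\varepsilon\}$ is then a genuine parabolic domain whose lateral boundary carries $u_x=\pm M_2$ and whose bottom carries $|u_x|\le\|u_x(\cdot,T_\varepsilon;u_0)\|_{L^\infty}$, and the maximum principle gives $|u_x|\le M_3$ there. In short: your reduction to a separation statement is sound, but the separation itself is precisely what needs proving, and the standard (and, here, intended) tool for converting the pointwise dips of Lemma \ref{lem4.3} into continuous barrier curves is the zero number diminishing property. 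Without it, or an equivalent substitute, your argument is incomplete.
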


\begin{proof}
Since $u(x,t;u_0) \to \infty$ as $t\to \infty$, there exists a large $ T' >0$  such that $u(\pm 1, t; u_0)
> M_2$ for all $t>T' $. Set $\zeta(x,t):= u_x(x,t; u_0)-M_2$. Then $\zeta$ satisfies
$$
\zeta_t = a_1 \zeta_{xx} +b_1  \zeta_x,\quad -1<x<1,\ t>0,
$$
for some bounded $a_1, b_1$ and $\zeta (1,t)>0>\zeta(-1,t)$ for $t>T'$. Using the zero number properties (cf. \cite{Ang})
we conclude that, for some $T_\varepsilon  >T' $, the function $\zeta(\cdot, t)$ has only non-degenerate zeros for
$t\geq T_\varepsilon $. Denote the largest zero of $\zeta(\cdot,t)$ in $(-1,1)$ by $ \rho_+(t)$. The non-degeneracy of $\rho_+(t)$ implies that $x= \rho_+(t)$ is a continuous curve. Moreover,
\eqref{4.10} indicates that $\rho_+(t) > 1- 2\varepsilon$.
In a similar way one can find another continuous curve $x=\rho_-(t)$ for $t>T_\varepsilon $ ($T_\varepsilon $ can be chosen larger if necessary)
such that $\rho_-(t) \in (-1, -1 + 2\varepsilon)$ and $u_x(\rho_-(t),t) = - M_2$ for $t >T_\varepsilon$.
Thus, using the maximum principle for $u_x$ in the domain $D(T_\varepsilon ) := \{(x,t) \mid \rho_-(t)<x<\rho_+(t),\ t>T_\varepsilon \}$,
we conclude that $|u_x (x,t;u_0)|\leq M_3$ in $D(T_\varepsilon)$. Now the estimate \eqref{interior-g-est}  follows
from the fact that $\rho_-(t)<-1+2\varepsilon < 1-2\varepsilon < \rho_+(t)$ for $t>T_\varepsilon$.
\end{proof}

\subsection{Convergence of general solutions}

Given any time sequence $\{t_n\}$ with $t_n\rightarrow \infty$, we consider the solution sequence $\{ u(x,t+t_n; u_0)-u(0,t_n;\rho )\}$.

For any given small $\varepsilon>0$ and any $\tau >0$, let $T_\varepsilon$ be as in Lemma \ref{lem:interior-g-est}.
Thus \eqref{4.9} together with \eqref{interior-g-est} implies that, for all large $n$,  the $C^{1,0}([2\varepsilon-1, 1-2\varepsilon]\times [-\tau, \tau])$-norms of $u(x,t+t_n; u_0)-u(0,t_n;\rho )$'s are bounded,
which are independent of $n$. For any $\alpha\in (0,1)$,
the standard parabolic theory furnishes the $C^{2+\alpha,1+\frac{\alpha}{2}}
([2\varepsilon-1, 1-2\varepsilon] \times [-\tau, \tau])$-bounds for the solution sequence, which are also independent of  $n$. Hence,
we can choose   a convergent subsequence.
By taking $\varepsilon \to 0$ and $\tau \to \infty$ and using Cantor's diagonal argument, we obtain
a subsequence $\{t_{n_k}\}$ of $\{t_{n}\}$ such that (as $k\to \infty$),
\begin{equation}\label{u-n-to-W}
u(x,t+t_{n_k}; u_0)-u(0,t_{n_k};\rho )\rightarrow \mathcal{W}(x,t) \qquad \text{in the }
C^{2+\alpha,1+\frac{\alpha}{2}}_{\text{loc}}\left((-1,1)\times \R \right)\text{-topology},
\end{equation}
for some entire solution $\mathcal{W}$ to (\ref{B})$_1$ (i.e., the equation in (\ref{B})).
On the other hand, as a consequence of Theorem \ref{thm:converge-symmetric},  we have (as $k\to \infty$),
$$
u(x,t+t_{n_k};\rho )-u(0, t_{n_k};\rho )\rightarrow \Phi(x)+\bar{c} t, \qquad u(x,t+T+t_{n_k};\rho )
-u(0,t_{n_k};\rho )\rightarrow \Phi(x)+\bar{c} (t+T),
$$
where $T$ is defined as in \eqref{new5.1}.
Hence, from (\ref{4.9}) we derive
\begin{equation}\label{5.6}
\Phi(x)+\bar{c} t\leq\mathcal{W}(x,t)\leq \Phi(x)+\bar{c} t+\bar{c} T,\qquad x\in(-1,1),\ t\in \R.
\end{equation}

Let
$$
\theta(x,t):= \arctan u_x(x,t; u_0),\quad x\in [-1,1],\ t>0.
$$
Then $\theta$ satisfies
\begin{equation}\label{eq-theta}
\left\{
 \begin{array}{ll}
 \theta_t = a(\tan \theta) \cos^2 \theta \cdot \theta_{xx} + a'(\tan \theta) \theta_x^2 + b(\tan\theta)\sin \theta \cdot \theta_x +
 b'(\tan\theta) \frac{\theta_x}{\cos\theta},  & -1<x<1,\ t>0,\\
 \\
 \theta(\pm 1,t)=  \pm \arctan u(\pm 1,t), & t>0.
 \end{array}
 \right.
\end{equation}
The global existence of $u$ (see Theorem \ref{thm:global}) implies that $\theta$ is well-defined for all $t>0$.
Moreover, by \eqref{u-n-to-W} we have
$$
\theta(x,t_n +t) \to  \arctan \mathcal{W}_x (x,t) \mbox{ in the } C^{2+\alpha, 1+\alpha/2}_{\text{loc}} ((-1,1)\times \R)
\mbox{-topology}.
$$
It follows from the zero number argument (see  Du and Matano \cite[Section 3]{DM})
that any $\omega$-limit of the bounded solution $\theta$ is a stationary one.
Therefore, for each fixed $\tau\in \R$, $ \arctan \mathcal{W}_x (x,\tau)$ (which is an $\omega$-limit of $\theta$) satisfies
$ [\arctan \mathcal{W}_x ]_t (x,\tau) \equiv 0$, that is, $\mathcal{W}_{xt} (x,\tau) \equiv 0$. Since $\tau$ is arbitrary, then $\mathcal{W}_{xt} (x,t) \equiv 0$ in $(-1,1)\times \R$. Therefore,
$$
\mathcal{W} (x,t) = P(x) + Q(t), \quad x\in (-1,1),\ t\in \R,
$$
for some functions $P(x)$ and $Q(t)$. Substituting it into the equation of $\mathcal{W}$ we have
$$
Q'(t) = a(P_x ) \frac{P_{xx}}{1+P_x^2} + b(P_x) \sqrt{1+P_x^2},\quad x\in (-1,1), \ t\in \R.
$$
Note that the left-hand side of the equation above is only dependent on $t$ while the right-hand side is only  dependent on $x$. Hence, both sides are the same constant, say $c_1$. Then $Q(t) = c_1 t +C_1$ and therefore, $\mathcal{W}(x,t)$ is a traveling wave $P(x) + c_1 t +C_1$.
Then \eqref{5.6} implies $c_1 =\bar{c}$. Hence, $P(x)$ is nothing but $\Phi(x)+C_2$. So
\begin{equation}\label{Wsiwhat}
\mathcal{W}(x,t) = \Phi(x) +\bar{c}t +C_3,
\end{equation}
for some $C_3 \in [0,\bar{c}T]$ (by \eqref{5.6}).

\medskip
\noindent
\begin{proof}[Proof of Theorem \ref{thm:main}] The existence of the solution to Problem \eqref{B} follows from Theorem \ref{thm:global}.
It remains to prove (\ref{1.5thereom2secnd}).
Suppose by contradiction that (\ref{1.5thereom2secnd}) is not true.
Then one could find an $\varepsilon_0>0$  and  a sequence $\{t_n\}$ such that $t_n>n$ and
\begin{equation}\label{contradbasis}
\left|\left[u(x,t+t_n;u_0) - u(0,t_n;u_0)\right] -\left[ \Phi(x) +\bar{c} t\right]\right|\geq\varepsilon_0.
\end{equation}
In particular, (\ref{contradbasis}) holds for any subsequence $\{t_{n_k}\}$.

On the other hand, in view of (\ref{u-n-to-W}) and (\ref{Wsiwhat}),  there exists a subsequence $\{t_{n_k}\}$  such that
$$
u(x,t+t_{n_k}; u_0) - u(0,t_{n_k};\rho) \to \Phi(x)+\bar{c} t +C,\quad \mbox{ as } k\to \infty.
$$
Thus, we have
\begin{align*}
&u(x,t+t_{n_k};u_0) - u(0,t_{n_k};u_0)\\
 =& [u(x,t+t_{n_k};u_0) - u(0,t_{n_k};\rho)] - [u(0,t_{n_k};u_0) - u(0,t_{n_k};\rho)]
\to \Phi(x) +\bar{c} t, \quad \mbox{ as } k\to \infty,
\end{align*}
 which contradicts (\ref{contradbasis}). Hence, (\ref{1.5thereom2secnd}) is true.
\end{proof}

\noindent{\textbf{Acknowledgements}}
 This work was supported by NNSFC (No. 11761058) and NSFS (No. 17ZR1420900, No. 19ZR1411700).  The authors are greatly indebted to Pro. B. Lou for many
useful discussions and  helpful comments.

\end{document}